\newtheorem{theorem}{Theorem}
\newtheorem{conjecture}{Conjecture}
\newtheorem{lemma}[theorem]{Lemma}
\newtheorem{claim}[theorem]{Claim}
\newtheorem{remark}[theorem]{Remark}
\newtheorem{problem}{Problem}
\theoremstyle{definition}
\newtheorem{example}[theorem]{Example}
\newcommand\eps{\varepsilon}
\renewcommand\le{\leqslant}
\renewcommand\ge{\geqslant}
\newcommand\E{{\mathbb E}}
\newcommand\Var{\operatorname{Var}} 
\renewcommand\Pr{{\mathbb P}}
\newcommand\prob[1]{\Pr\left(#1\right)}
\newcommand{\Gnp}[1][p]{G_{n,{{#1}}}}
\newcommand\cC{\mathcal{C}}
\newcommand\cD{\mathcal{D}}
\newcommand\cE{\mathcal{E}}
\newcommand\cH{{\mathcal H}}
\newcommand\cI{{\mathcal I}}
\newcommand\cK{\mathcal{K}}
\newcommand\cL{\mathcal{L}}
\newcommand\cS{{\mathcal S}}
\newcommand{\fS}{{\mathfrak S}}
\newcommand{\x}{{\mathbf x}} 
\newcommand{\y}{{\mathbf y}} 
\newcommand{\sX}{X^*}
\newcommand{\refT}[1]{Theorem~\ref{#1}}
\newcommand{\refL}[1]{Lemma~\ref{#1}}
\newcommand{\refR}[1]{Remark~\ref{#1}}
\newcommand{\refS}[1]{Section~\ref{#1}}
\newcommand{\refP}[1]{Problem~\ref{#1}}
\newcommand{\refE}[1]{Example~\ref{#1}}
\newcommand{\refF}[1]{Figure~\ref{#1}}
\newcommand{\refApp}[1]{Appendix~\ref{#1}}
\newcommand{\refConj}[1]{Conjecture~\ref{#1}}
\newcommand{\refCl}[1]{Claim~\ref{#1}}
\newcommand\set[1]{\ensuremath{\{#1\}}}
\newcommand\bigset[1]{\ensuremath{\bigl\{#1\bigr\}}}
\newcommand\biggset[1]{\ensuremath{\biggl\{#1\biggr\}}}
\newcommand\bigpar[1]{\bigl(#1\bigr)}
\newcommand\Bigpar[1]{\Bigl(#1\Bigr)}
\newcommand\biggpar[1]{\biggl(#1\biggr)}
\newcommand\bigsqpar[1]{\bigl[#1\bigr]}
\newcommand\biggsqpar[1]{\biggl[#1\biggr]}
\newcommand\bigcpar[1]{\bigl\{#1\bigr\}}
\newcommand\Bigcpar[1]{\Bigl\{#1\Bigr\}}
\newcommand\biggcpar[1]{\biggl\{#1\biggr\}}
\def\rompar(#1){\textup(#1\textup)}    
\def\xexp(#1){e^{#1}}
\newcommand\ceil[1]{\lceil#1\rceil}
\newcommand\Bigceil[1]{\Bigl\lceil#1\Bigr\rceil}
\newcommand{\al}[1]{\alpha^*_{#1}}
\newcounter{case}
\newcounter{thmenumerate}
\let\OLDthebibliography\thebibliography
\renewcommand\thebibliography[1]{
  \OLDthebibliography{#1}
  \setlength{\parskip}{0pt}
  \setlength{\itemsep}{0pt plus 0.3ex}
}
\begin{document}

\title{A Counterexample to the DeMarco-Kahn Upper Tail Conjecture} 
\author{Matas {\v{S}}ileikis\thanks{Department of Theoretical Computer Science, Institute of Computer Science of the Czech Academy of Sciences, 182~07~Prague, Czech Republic. 
E-mail: {\tt matas.sileikis@gmail.com}. With institutional support RVO:67985807. Research supported by the Czech Science Foundation, grant number GJ16-07822Y.}
 \ 
 and Lutz Warnke\thanks{School of Mathematics, Georgia Institute of Technology, Atlanta GA~30332, USA.
E-mail: {\tt warnke@math.gatech.edu}. Research partially supported by NSF Grant DMS-1703516 and a Sloan Research Fellowship.}}
\date{9 October 2018; revised February 8, 2019} 
\maketitle

\begin{abstract}
Given a fixed graph~$H$, what is the (exponentially small) probability that the number~$X_H$ of copies of~$H$ in the binomial random graph~$G_{n,p}$ is at least twice its mean? 
Studied intensively since the mid~1990s, this so-called infamous upper tail problem remains a challenging testbed for concentration inequalities. 
In~2011 DeMarco and Kahn formulated an intriguing conjecture about the exponential rate of decay of~$\Pr(X_H \ge (1+\eps)\E X_H)$ for fixed~$\eps>0$. 
We show that this upper tail conjecture is false, by exhibiting an infinite family of graphs violating the conjectured bound.
\end{abstract}

\section{Introduction}
Understanding the distribution of subgraph counts is one of the central topics in random graph theory. 
Ever since the seminal paper of Erd{\H{o}}s and R{\'e}nyi~\cite{ER1960} from 1960 it has served as a rich source of intriguing probabilistic challenges and conjectures --- 
repeatedly stimulating the development of new insights and tools in combinatorial probability theory (in particular concentration inequalities). 

In this note we focus on the tails of the number~$X_H=X_H(n,p)$ of copies of a fixed graph~$H$ 
in the binomial random graph~$G_{n,p}$, which have been intensively studied for decades. 
Indeed, in the~1980s the need for exponentially small tail probabilities emerged in applications, 
and the behaviour of the \emph{lower tail} $\Pr(X_H \le (1-\eps)\E X_H)$ was 
eventually resolved by the celebrated Janson's inequality~\cite{JLR87,J90,RW2012J,JWL}.  
In the early~1990s the need for also understanding the exponential decay of the \emph{upper tail}~$\Pr(X_H \ge (1+\eps) \E X_H)$ became evident, 
and since then the following `infamous' upper tail problem has proven to be much more challenging 
than its lower tail counterpart (see~\cite{UT} and~\cite{J90,RR94} as well as~\cite[Section~4.8]{Vu2002} and~\cite[Problem~6.1]{DL}). 
\begin{problem}[Upper tail problem for subgraph counts]\label{pr:UT} 
Given a fixed graph~$H$ with~$e_H \ge 1$ edges, 
determine for fixed~$\eps > 0$ and arbitrary~$p = p(n) \in (0,1)$ the order of magnitude of 
\begin{equation}\label{eq:UTP}
-\log \Pr(X_H \ge (1+\eps) \E X_H) .
\end{equation}
\end{problem}
\noindent 
In~2002 Janson, Oleszkiewicz and Ruci{\'n}ski~\cite{JOR} finally
determined the exponential rate of decay~\eqref{eq:UTP} up to a factor of~$O(\log (1/p))$.  
This breakthrough solved \refP{pr:UT} for constant edge-probabilities~$p \in (0,1)$, 
but closing the logarithmic gap for~$p=o(1)$ remained an elusive technical challenge.

Shortly after the upper tail problem was settled for triangles~$H=K_3$~\cite{C12,DKtriangles}, 
in~2011 DeMarco and Kahn solved the more general case of fixed-size 
cliques~$H=K_r$ with~$r \ge 3$~\cite{DKcliques}, 
and also formulated a plausible \emph{conjecture} on the general solution of \refP{pr:UT};  
see \refConj{conj:UT} below. 
This `upper tail conjecture' has been verified for large~$p=p(n)$ of form~$p \ge n^{-\delta_H}$ via large deviation machinery~\cite{CD16,LZ17,BGLZ17,Eldan16}, 
and for small~$p=p(n)$ of form~$p \le n^{-v/e} (\log n)^{C_H}$ for so-called strictly balanced 
graphs~$H$~\cite{Vu2000,Sileikis2012,LW16} (where~$e_F/v_F < e_H/v_H$ for any non-empty~$F \subsetneq H$); 
see also~\cite{RW15,Sileikis,Sileikis2012,Stars} for further supporting results.  
In fact, this conjecture was also described as \mbox{`likely to be true'} 
in the recent random graphs book by Frieze and Karo{\'n}ski~\cite[Section~5.4]{KFRG}. 

\pagebreak[1]

In this note we show that the $7$-year-old DeMarco--Kahn upper tail conjecture for subgraph counts \mbox{is \emph{false}}, by exhibiting an infinite family of graphs which violate 
the conjectured behavior of the upper tail~\eqref{eq:UTP}; see \refT{thm:UT:LB} below. 
On a conceptual level, our results 
shed new light on the upper tail behaviour for small edge-probabilities~$p=p(n)$, indicating 
that close to the threshold of appearance the reason for having `too many' copies of~$H$ can be more complicated than previously anticipated (see Sections~\ref{sec:discussion} and~\ref{sec:conclusion}). 
In retrospect this might perhaps not seem so surprising, 
taking into account that at the appearance threshold the limiting distribution of~$X_H$ can be quite complicated, as discovered in the 1980s~\cite{BB81,J87,BW89,AR90}.

\subsection{Main result}\label{sec:conjecture}
Turning to the details, we now formally state\footnote{In the spirit of earlier questions and examples in the area (see, e.g.,~\cite[Section~4]{Vu2001} and~\cite[Section~6]{DL}), DeMarco and Kahn formally stated~\cite[Conjecture~10.1]{DKcliques} for~$\eps=1$ only, tacitly assuming the necessary condition~$p \le (1+\eps)^{-1/e_H}$. 
The natural variant~\eqref{eq:conj:UT} for arbitrary fixed~$\eps>0$ is of course also attributed to them; cf.~\cite[Section~5.4]{KFRG} and~\cite[Section~4]{DKtriangles}. 
In~\eqref{eq:conj:UT:MG} we also use a simplified (but up to constant factors equivalent) definition of~$M_H$; 
cf.~\cite[(46)]{DKcliques} and~\cite[Theorem~1.5 and Remark~1.6]{JOR}.}
the upper tail conjecture from~\cite{DKcliques}, 
which proposes a compelling solution to \refP{pr:UT}.   
Let~$\mu_H := \E X_H$, $\sigma^2_H := \Var X_H$, and $m_H := \max_{F \subseteq H: v_F \ge 1} e_F/v_F$, as usual. 
As pointed out in~\cite{JOR,DKcliques,Sileikis,KFRG}, to avoid degenerate behaviour of the upper tail it is natural and convenient to assume 
(i)~that $p=p(n)$ is above 
the appearance threshold~$n^{-1/m_H}$ of~$H$,
and (ii)~that $(1+\eps) \E X_H$ is at most the number of copies of~$H$ in the complete graph~$K_n$, 
which is equivalent to~$(1+\eps)p^{e_H} \le 1$. 
\begin{conjecture}[DeMarco and Kahn, 2011]\label{conj:UT} %
Let~$H$ be a graph with~$e_H \ge 1$ edges.  
For fixed~$\eps>0$ and any~$p=p(n)$ with~$n^{-1/m_H} < p \le (1+\eps)^{-1/e_H}$ we have 
\begin{equation}\label{eq:conj:UT}
-\log \prob{X_H \ge (1+\eps) \E X_H} = \Theta\Bigpar{\min \bigset{ \Phi_H, \: M_H \log(1/p) }} ,
\end{equation}
where the implicit constants in~\eqref{eq:conj:UT} may depend on~$\eps$ and~$H$, 
with 
\begin{align}
\label{eq:PhiH}
\Phi_H= \Phi_H(n,p) &:= \min_{G \subseteq H: e_G \ge 1} \mu_G ,\\
\label{eq:conj:UT:MG}
M_H= M_H(n,p) &:= 
\begin{cases}  
	{\displaystyle \min_{G \subseteq H: e_G \ge 1} \mu_G^{1/\alpha^*_G}} & \quad \text{if } p < n^{-1/\Delta_H}, \\
	{\displaystyle n^2p^{\Delta_H}} & \quad  \text{if } p \ge n^{-1/\Delta_H},
\end{cases}
\end{align}
where $\Delta_G$ is the maximum degree of~$G$, and $\alpha^*_G$ is the fractional independence number\footnote{The fractional independence number is defined as $\alpha^*_G := \max \sum_{v \in V(G)} f(v)$, where maximum is taken over all functions $f: V(G) \to [0,1]$ satisfying $f(u)+f(v) \le 1$ for every edge~$\{u,v\} \in E(G)$; see, e.g.,~\cite[Appendix~A]{JOR}.\label{fn:alpha}
} of~$G$.
\end{conjecture}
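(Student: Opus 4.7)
The plan is to refute \refConj{conj:UT} by a \emph{planting} argument: we exhibit an infinite family of graphs $\flower=\flower_k$ and corresponding probabilities $p=p_k(n)$ inside the admissible range $n^{-1/m_\flower}<p\le (1+\eps)^{-1/e_\flower}$ for which $-\log \Pr(X_\flower \ge (1+\eps) \E X_\flower)$ is strictly smaller than the conjectured rate. For any fixed subgraph $G^*\subseteq K_n$ with $N$ edges we have $\Pr(G^*\subseteq G_{n,p})=p^N$, and therefore
\begin{equation*}
\Pr\bigpar{X_\flower\ge (1+\eps)\E X_\flower}\;\ge\; p^N\cdot \Pr\bigpar{X_\flower\ge (1+\eps)\E X_\flower \,\bigm|\, G^*\subseteq G_{n,p}}.
\end{equation*}
Hence it suffices to design a pair $(\flower_k,G^*)$ for which this cheap planting of $G^*$, combined with typical fluctuations in $G_{n,p}$, produces many copies of $\flower$.

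Concretely, for each admissible $n$, $k$ and $p$ we will specify $G^*=G^*(n,p,k)$ with $N=N(n,p,k)$ edges and verify the two properties
\begin{align*}
\text{(i)}&\quad N\log(1/p)\,=\,o\bigpar{\min\bigset{\Phi_\flower,\,M_\flower\log(1/p)}},\\
\text{(ii)}&\quad \Pr\bigpar{X_\flower\ge (1+\eps)\E X_\flower \,\bigm|\, G^*\subseteq G_{n,p}}\,\ge\,\tfrac{1}{2}.
\end{align*}
Substituting (i) and (ii) into the displayed inequality gives $-\log\Pr(X_\flower\ge (1+\eps)\E X_\flower)\le N\log(1/p)+O(1)=o(\min\{\Phi_\flower,M_\flower\log(1/p)\})$, which contradicts \eqref{eq:conj:UT} (whose right-hand side tends to infinity in our regime). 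The proof of (ii) is the technical heart and will be carried out via a first/second moment calculation on the number of $\flower$-copies that contain $G^*$ (or a prescribed sub-structure of it): the planted edges force many \emph{partial} copies of $\flower$, each of which is completed into a full copy by random edges of $G_{n,p}$ with a probability whose aggregate gives a substantial expected count, and the variance is controlled using the local structure of $\flower_k$.

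The main technical difficulty is designing $\flower_k$ and $G^*$ so that (i) and (ii) hold simultaneously. To beat $M_\flower\log(1/p)$, the planting cannot be captured by any single subgraph $G\subseteq \flower$ appearing in the minimisation defining $M_\flower$; to beat $\Phi_\flower$, the number of planted edges must be strictly less than the mean-shift `tilt' cost. We therefore expect $\flower_k$ to be neither strictly balanced nor clique-like, and $G^*$ to be a \emph{hybrid} structure (for example, a mixture of high-degree vertices and a small dense core) whose contribution to $X_\flower$ must be measured by combining the deterministic skeleton with random completions. Since the previously studied `pure' planting strategies (clique plantings, high-degree vertex plantings, balanced-subgraph plantings) all obey \refConj{conj:UT} up to constants, the non-trivial content of the argument lies in identifying a family $\flower_k$ in which a genuinely hybrid planting outperforms every such pure one, and in making the associated second-moment bookkeeping tight enough that a bounded-away-from-zero fraction of the expected partial copies are indeed completed by random edges.
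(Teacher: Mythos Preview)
Your framework has a structural ceiling that prevents it from reaching the regime where the conjecture actually fails. By planting a \emph{fixed} subgraph $G^*\subseteq K_n$ with $N\ge 1$ edges, the cost you incur is exactly $N\log(1/p)$. Near the appearance threshold $p=n^{-1/m_H+o(1)}$ this is $\asymp N\log n\ge \log n$. But for the counterexample graphs $H=C_\ell^{+r}$ the paper uses (an $\ell$-cycle with $r\ge 2$ pendant edges at one vertex), one has $\Phi_H\asymp (np)^{\ell}$ and $M_H\log(1/p)\gg \log n$, so the conjectured minimum equals $\Phi_H$, which can be made $\ll \log n$ by taking $np\to\infty$ arbitrarily slowly. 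Your condition~(i) then demands $N\log(1/p)=o(\Phi_H)=o(\log n)$, forcing $N=0$, which is incompatible with~(ii). In short: any scheme that pays $\log(1/p)$ per planted edge cannot produce a counterexample in the range $1\ll np^{m_H}\ll (\log n)^{c_H}$ where the paper operates. Your closing paragraph correctly senses that ``pure'' plantings are captured by $M_H$, but the proposed remedy (a fixed hybrid $G^*$ with random completions) is still a fixed planting and still pays $\log(1/p)$ per edge.

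The paper's mechanism is genuinely different. It does \emph{not} plant a fixed $G^*$; instead it exposes $G_{n,p}$ in two rounds. The first round finds, with probability $1-o(1)$, a copy of the cycle $C_\ell$ \emph{somewhere} (no cost). The second round asks that a designated vertex of this random cycle has at least $z\asymp \mu_H^{1/r}$ neighbours among the remaining $n-\ell$ vertices; this is a binomial event whose probability is $\binom{n-\ell}{z}p^z(1-p)^{n-\ell-z}\ge (np/Cz)^z e^{-np}$, giving exponent $O\bigl(z\log(np)\bigr)$ rather than $O\bigl(z\log(1/p)\bigr)$. The saving from $\log(1/p)\asymp\log n$ to $\log(np)=\log(np^{m_H})$ --- which comes precisely from \emph{not} fixing which $z$ vertices play the role of leaves --- is what lets the exponent drop below $\Phi_H$. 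This ``locally-disjoint'' mechanism is the missing idea; once you have it, the family $C_\ell^{+r}$ with $\ell\ge 3$, $r\ge 2$ and the comparison $(np)^{1+\ell/r}\log(np)\ll \min\{(np)^\ell,\log n\}$ finish the refutation.
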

%
%
\noindent 
One conceptual contribution of the above conjecture was to enhance the exponent~\eqref{eq:conj:UT} by the~$\Phi_H$ term, %
whose inclusion only matters for~$p \le n^{-1/m_H} (\log n)^{O(1)}$ unless~$\Delta_H=1$ holds 
(cf.~\cite[Remark~8.3]{JOR} and~\refS{sec:discussion}).

Our main result shows that \refConj{conj:UT} is false, 
by proving that there are infinitely many graphs~$H$ 
which violate the conjectured exponential rate of decay~\eqref{eq:conj:UT} 
close to the appearance threshold~$n^{-1/m_H}$. 
\begin{theorem}[Counterexamples to \refConj{conj:UT}]\label{thm:UT:LB} 
There is an infinite family~$\cH$ of graphs 
such that the following holds for any $H \in \cH$. There exists a constant $c_H>0$ such that for fixed~$\eps >0$ and any~$p=p(n) \in [0,1]$ with $n^{-1/m_H} \ll p \ll n^{-1/m_H} (\log n)^{c_H}$  
we have  
\begin{equation}\label{eq:thm:UT:LB}
-\log \prob{X_H \ge (1+\eps) \E X_H} = o\Bigpar{\min \bigset{ \Phi_H, \: M_H \log(1/p) }} .
\end{equation}
\end{theorem}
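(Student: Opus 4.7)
The plan is to produce an infinite family~$\cH$ of graphs together with, for each~$H \in \cH$, an event $E$ on~$G_{n,p}$ such that (i) $E \subseteq \{X_H \ge (1+\eps) \E X_H\}$ and (ii) $\Pr(E)$ exceeds $\exp\bigpar{-C \min\bigset{\Phi_H, M_H \log(1/p)}}$ for every $C > 0$ and all sufficiently large~$n$. Since \refConj{conj:UT} has already been verified for strictly balanced graphs in the relevant range, $\cH$ must consist of graphs that are \emph{not} strictly balanced, i.e., each $H \in \cH$ admits a proper subgraph $F \subsetneq H$ with $e_F/v_F = m_H$. Setting $c := p\cdot n^{1/m_H}$, the hypothesis of the theorem becomes $1 \ll c \ll (\log n)^{c_H}$, and $\Phi_H \asymp c^{e_F}$ is polylogarithmic in~$n$; the task is therefore to show that $\Pr(X_H \ge (1+\eps)\mu_H)$ decays sub-polylogarithmically.

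The candidate event $E$ is the presence in~$G_{n,p}$ of a small ``seed'' $S \subseteq K_n$ constructed so as to already contain many copies of~$H$: for instance, a planted clique~$K_T$ whose own vertex set supports at least $(1+\eps)\E X_H$ copies of~$H$, or more generally a constellation of $F$-copies glued along a common vertex or edge so that each combination extends into a copy of~$H$. Condition~(i) is then automatic from the trivial inclusion $\{S \subseteq G_{n,p}\} \subseteq \{X_H \ge \#(H\text{-copies inside }S)\}$, while $-\log \Pr(E) = e_S \log(1/p)$. The task reduces to calibrating the family~$\cH$ and the seeds~$S$ so that
\[
	e_S \log(1/p) \;=\; o\bigpar{\min\bigset{\Phi_H,\, M_H \log(1/p)}} .
\]
A concrete choice is to take $H = K_s$ with a small attached gadget that keeps $e_H/v_H$ close to $m_H$; then the relevant exponent of~$c$ becomes large, and the displayed inequality reduces to $c^{\alpha(H)} \gg \log n$ with $\alpha(H) > 0$ that can be made arbitrarily large by varying the parameters of the family, which is accommodated throughout $1 \ll c \ll (\log n)^{c_H}$ by tuning~$c_H$.

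The principal obstacle is the simultaneous control of the two terms in $\min\bigset{\Phi_H,\,M_H \log(1/p)}$: a seed that is sparse enough to beat $\Phi_H \asymp c^{e_F}$ may still be more expensive than the high-degree-star contribution to~$M_H$, so the family~$\cH$ must be engineered so that the fractional-independence-number optimization defining~$M_H$ keeps $M_H \log(1/p)$ above the planting cost uniformly in~$c$. Two further subtleties require attention: first, the seed~$S$ must force a \emph{constant}-factor surplus over $\mu_H$ even after accounting for deletions of $H$-copies induced by conditioning on~$E$, which is handled by a conditional second-moment argument once the $F$-core is chosen; second, verifying that the required inequality persists across the full stated range of~$p$ and for an \emph{infinite} family of graphs --- rather than a single counterexample --- is the central technical content of the proof, and determines the final shape of~$\cH$ and the value of~$c_H$.
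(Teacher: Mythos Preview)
Your proposal has a structural gap that no choice of~$\cH$ can fix. You commit to a fixed seed $S \subseteq K_n$ and the event $E=\{S \subseteq G_{n,p}\}$, so that $-\log\Pr(E)=e_S\log(1/p)$. But the parameter~$M_H$ is, up to constant factors, \emph{precisely} the minimum of~$e_S$ over all seeds~$S$ whose presence forces $X_H \ge (1+\eps)\mu_H$ (allowing the subgraph trick you mention): this is the Janson--Oleszkiewicz--Ruci\'nski result behind~\eqref{eq:lower:clustered} and the discussion in \refS{sec:discussion}. Hence $e_S\log(1/p)=\Omega\bigl(M_H\log(1/p)\bigr)$ for \emph{every} admissible seed, and the fixed-planting mechanism can never produce $o\bigl(M_H\log(1/p)\bigr)$. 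Your ``principal obstacle'' paragraph misdiagnoses the difficulty: it is not that~$M_H$ might happen to be small for some~$H$, but that~$M_H$ is by construction a lower bound on your cost regardless of~$H$. No amount of engineering of the family resolves this.

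The paper circumvents the obstruction with a different mechanism. Taking $H=C_\ell^{+r}$ (an $\ell$-cycle with $r \ge 2$ pendant edges at one vertex, so $m_H=1$), one uses a two-round exposure: the first round locates a copy of~$C_\ell$ \emph{somewhere} in~$G_{n,p}$ with probability $1-o(1)$ (free, since $np \gg 1$); the second round asks that one vertex of this random cycle acquire exactly $z \asymp \mu_H^{1/r}$ neighbours outside it, a binomial event of probability $(np)^{-O(z)}$ rather than~$p^{O(z)}$. This yields $\binom{z}{r}\ge(1+\eps)\mu_H$ copies of~$H$ at exponent $O\bigl(\mu_H^{1/r}\log(np)\bigr)$. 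The replacement of $\log(1/p)\asymp\log n$ by $\log(np)=O(\log\log n)$ is the whole point: planting the same graph $C_\ell^{+z}$ at \emph{fixed} vertices, as your scheme does, gives exponent $(\ell+z)\log(1/p)$, which is not $o(\Phi_H)=o\bigl((np)^\ell\bigr)$ when $np \ll (\log n)^{c_H}$. A short calculation then shows $\Phi_H\asymp(np)^\ell$ and $M_H\log(1/p)\gg\log n$, so that $(np)^{1+\ell/r}\log(np)=o\bigl(\min\{(np)^\ell,\log n\}\bigr)$ for $\ell\ge 3$, $r\ge 2$ and small~$c_H$, which gives~\eqref{eq:thm:UT:LB}.
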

\begin{remark}\label{rem:UT:LB}
Given $H \in \cH$, for fixed~$\eps>0$ and any~$p=p(n) \in [0,1]$ with $n^{-1/m_H} \ll p \ll 1$ we also have 
\begin{equation}\label{eq:rem:UT:LB}
-\log \prob{X_H \ge (1+\eps) \E X_H} = o(\Phi_H)  .
\end{equation}
\end{remark}
\noindent 
The proof of \refT{thm:UT:LB} and \refR{rem:UT:LB} is given in \refS{sec:example}. 
As we shall see, it uses the family of graphs~$\cH$ illustrated in \refF{fig:cycle}, 
which are all connected and balanced (i.e., satisfy~$e_H/v_H =m_H$).  
\refR{rem:UT:LB} demonstrates that their upper tail probabilities are significantly larger than the lower tail probabilities 
for virtually all edge-probabilities~$p$ of interest, since~\cite{J90,JWL} gives under analogous assumptions that 
\[
-\log \prob{X_H \le (1-\eps) \E X_H} = \Theta(\Phi_H) .
\]
We find this complete separation of the decay of the two tails conceptually interesting 
(it was previously only known a bit above the appearance threshold, 
i.e., for~$n^{-1/m_H}\log n \ll p \ll 1$; see~\cite[Remark~8.3]{JOR}). 
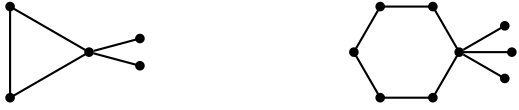
\begin{figure}[t]
\begin{center}
		\begin{tikzpicture}[thick,scale=0.7,decoration=brace]
    \tikzstyle{vertex}=[draw,circle,fill,fill opacity=1,minimum size=1pt, inner sep=1pt]
    \def\l{3}; 
    \def\r{2};
	\foreach \x in {1, 2, ..., \l}
	{
		\pgfmathsetmacro\angle{360/\l*(1 - \x)};
		\draw (\angle:1) node[vertex] (n\x) {} -- ( - 360/\l*\x:1) node () {};
	}
	\foreach \x in {1, 2, ..., \r} 
	{
		\pgfmathsetmacro\label{int(\x + \l)};
		\pgfmathsetmacro\angle{(\x - 1/2 - \r/2)*-30};
		\draw (n1) -- +(\angle:1) node[vertex] () {};
	}
    \begin{scope}[xshift=200]
    \def\l{6}; 
    \def\r{3};
	\foreach \x in {1, 2, ..., \l}
	{
		\pgfmathsetmacro\angle{360/\l*(1 - \x)};
		\draw (\angle:1) node[vertex] (n\x) {} -- ( - 360/\l*\x:1) node () {};
	}
	\foreach \x in {1, 2, ..., \r} 
	{
		\pgfmathsetmacro\label{int(\x + \l)};
		\pgfmathsetmacro\angle{(\x - 1/2 - \r/2)*-30};
		\draw (n1) -- +(\angle:1) node[vertex] () {};
	}
    \end{scope}
	  \end{tikzpicture}\vspace{-1.5em}
  \end{center}
  \caption{Examples 
	of the graph~$C_\ell^{+r}$ with~$(\ell,r)=(3,2)$ and~$(\ell,r)=(6,3)$, 
	obtained by attaching~$r$ pendant edges to some vertex of an~$\ell$-vertex cycle. 
	\refT{thm:UT:LB} shows that any graph in~$\cH := \set{C_\ell^{+r}: \: \ell \ge 3, r \ge 2}$ is a counterexample to the DeMarco--Kahn upper tail conjecture (see \refS{sec:example} for the full details).}
		\label{fig:cycle}
\end{figure}

\subsection{Discussion}\label{sec:discussion}
\refConj{conj:UT} can be interpreted as an educated guess to a variant of the following question: 
what is the most likely way to get at least $(1+\eps) \E X_H$ copies of~$H$ in~$G_{n,p}$?   
Indeed, as we shall see, it is based on two different mechanisms 
that each enforce~$X_H \ge (1+\eps) \E X_H$ in~$G_{n,p}$, 
giving two lower bounds with exponents~$M_H \log(1/p)$ and~$\Phi_H$. 
Hence~\eqref{eq:conj:UT} intuitively predicts that the dominating (more likely) mechanism determines the exponential decay of the upper tail, ignoring constant factors in the exponent.

The first \emph{clustered mechanism} is based on the idea that suitable `local' clustering of the edges can enforce many copies of~$H$ (e.g., a clique~$K_z$ contains~$\binom{z}{3} >2 \binom{n}{3}p^3$ triangles for suitable $z \asymp np$). 
In particular, if~$F \subseteq K_n$ contains at least $(1+\eps)\E X_H$ copies of~$H$, then by simply enforcing~$F \subseteq G_{n,p}$ we obtain 
\[
\Pr(X_{H} \ge (1+\eps) \E X_{H}) \ge \Pr(F \subseteq G_{n,p}) = p^{e_F} .
\] 
Janson, Oleszkiewicz and Ruci{\'n}ski~\cite{JOR} noted that one does not need to directly enforce copies of~$H$: it is enough if~$F \subseteq K_n$ contains unusually many copies of some subgraph~$J \subseteq H$ (say at least~$2(1+\eps)\E X_J$ many),  
since after planting~$F \subseteq G_{n,p}$ the rare upper tail event~$\set{X_H \ge (1+\eps)\E X_H}$ becomes `typical'. 
By minimizing the number~$e_F$ of edges over all such special graphs~$F \subseteq K_n$, 
this eventually gives 
a lower bound of form 
\begin{equation}\label{eq:lower:clustered}
\Pr(X_{H} \ge (1+\eps) \E X_{H}) \ge \max_{F \subseteq K_n} \Pr(X_{H} \ge (1+\eps) \E X_{H} \mid F \subseteq G_{n,p}) p^{e_F} \ge p^{\Theta(M_H)} ,
\end{equation}
see~\cite[Theorems~1.5 and~3.1]{JOR} for the full details. This explains the exponent~$M_H \log(1/p)$ in~\eqref{eq:conj:UT}.

The second \emph{disjoint mechanism} is based on many mutually exclusive `global' configurations of the edges, which each contain many disjoint copies of~$H$.  
Let~$\cD_{H,\eps}$ denote the event that~$G_{n,p}$ contains exactly~$k:=\ceil{(1+\eps)\E X_{H}}$ disjoint copies of~$H$ (either vertex-disjoint or edge-disjoint),  
and write~$N=N(n,H)$ for the number of~$H$-copies in~$K_n$. 
Summing over distinct $k$-sets of disjoint $H$-copies, 
for strictly-balanced graphs this eventually gives a binomial-like lower bound in some range, which turns out to be roughly of form 
\[
\Pr(X_{H} \ge (1+\eps) \E X_{H}) \ge \Pr(\cD_{H,\eps}) \approx \binom{N}{k} \cdot p^{k e_H} \cdot (1-p^{e_H})^{N-k} = \exp\Bigcpar{-\Theta(\mu_H)} , 
\]
see~\cite{DKcliques,Sileikis,Sileikis2012} for the full details. 
As in the clustered mechanism, it turns out that we can again optimize the resulting bound over all relevant subgraphs~$J \subseteq H$, 
eventually leading to a lower bound of form 
\begin{equation}\label{eq:lower:disjoint}
\Pr(X_{H} \ge (1+\eps) \E X_{H}) \ge \exp\Bigcpar{-\Theta(\Phi_H)}, 
\end{equation}
see~\cite[Theorem~4.4]{Sileikis} for the full details 
(note that, due to subgraphs consisting of a single edge, the optimization leading to~\eqref{eq:lower:disjoint} 
includes the mechanism which is based on enforcing~$G_{n,p}$ to have \mbox{`too many edges'}). 
This explains the exponent~$\Phi_H$ in~\eqref{eq:conj:UT}.  
%
%
Alternatively, by combining 
the intuition that~$X_H$ should have \emph{subgaussian tails} (in some range) 
with the standard variance estimate
\begin{equation}\label{eq:sigmaH}
\sigma_H^2 \asymp \mu_H^2/\Phi_H
\end{equation}
from~\cite[Lemma~3.5]{JLR}, 
for fixed~$\eps>0$ we again (heuristically) arrive at an exponent of order~$(\eps\mu_H)^2/\sigma^2_H \asymp \Phi_H$.

A key message of \refT{thm:UT:LB} and our counterexamples from Sections~\ref{sec:example}--\ref{sec:further} 
is that for some graphs~$H$ there is a yet another mechanism (which we tentatively call \emph{locally-disjoint mechanism}), 
whose lower bound can beat both aforementioned mechanisms close to the appearance threshold~$n^{-1/m_H}$.  
\refR{rem:UT:LB} also shows that for certain graphs the disjoint mechanism (with exponent~$\Phi_H$) never wins, 
complementing the known fact that the clustered mechanism (with exponent~$M_H\log(1/p)$) never wins for matchings~\cite{JOR,DKcliques}. 


\subsection{Organization}
The remainder of this note is organized as follows. 
In \refS{sec:example} we prove \refT{thm:UT:LB} and \refR{rem:UT:LB}, 
i.e., present a simple set of counterexamples and describe how they contradict the upper tail conjecture. 
In \refS{sec:further} we elaborate the basic idea: 
we describe a larger set of counterexamples, 
and also give a new lower bound for the upper tail. 
The final \refS{sec:conclusion} contains some concluding remarks and conjectures.

\section{Simple counterexamples: Proof of \refT{thm:UT:LB} and \refR{rem:UT:LB}}\label{sec:example} %
In this section we prove \refT{thm:UT:LB} and \refR{rem:UT:LB} 
by considering the  graphs~$H=C_\ell^{+r}$ illustrated in \refF{fig:cycle}, 
which are constructed from an~$\ell$-vertex cycle $C_\ell$ by connecting~$r$ additional vertices 
to the same vertex of the cycle (so~$v_H = e_H = \ell + r$). 
These graphs have a history of exemplifying  non-trivial behaviour of subgraph counts: 
(i)~in~1987 Janson used~$C_\ell^{+2}$ to demonstrate that at the threshold~$X_H$ can converge to complicated distributions~\cite[Section~10]{J87}, 
and (ii)~in~2000 Janson and Ruci\'nski used~$C_3^{+3}$  to demonstrate that near the threshold~$X_H$ need not always have subgaussian tails~\cite[Example~6.14]{DLP}. 
As we shall see, the following auxiliary result demonstrates yet another non-trivial behaviour of the graphs~$H=C_\ell^{+r}$, 
since the lower bound~\eqref{eq:lem:CE:fixed} will contradict \refConj{conj:UT} (and establish \refT{thm:UT:LB}). 
Note that~$m_H=1$ and~$\mu_H \asymp (np)^{\ell + r}$. 
\begin{lemma}\label{lem:CE:fixed}
Given integers~$\ell \ge 3$ and~$r \ge 1$, let~$H := C_\ell^{+r}$ be the graph defined above. 
For fixed~$\eps > 0$ and any~$p=p(n) \in [0,1]$ with $1 \ll np \ll n^{1/(1+\ell/r)}$ we have
\begin{equation}\label{eq:lem:CE:fixed}
\prob{X_H \ge (1+\eps) \E X_H} \ge \exp\biggcpar{- O\Bigpar{\mu_H^{1/r}\log(np)}} ,
\end{equation}
where the implicit constant in~\eqref{eq:lem:CE:fixed} may depend on~$\eps$ and~$H$. 
\end{lemma}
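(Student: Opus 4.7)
My plan is to use a ``plant-and-condition'' mechanism that exploits the unique vertex of $H=C_\ell^{+r}$ of maximum degree $r+2$: whenever a vertex $v$ in $G_{n,p}$ both lies on an $\ell$-cycle and has $d$ further neighbours outside that cycle, it serves as the centre of at least $\binom{d}{r}$ distinct copies of $H$ --- one per choice of $r$ pendants among those $d$ external neighbours. Choosing $d:=\lceil C(np)^{(\ell+r)/r}\rceil$ for a constant $C=C(\eps,\ell,r)$ large enough ensures $\binom{d}{r}\ge(1+\eps)\E X_H$. The conceptual point is that letting $v$ acquire such a large degree \emph{naturally} costs only $\exp(-\Theta(\mu_H^{1/r}\log(np)))$, which features the crucial $\log(np)$ factor rather than the $\log(1/p)$ that would appear if we planted all $d$ edges at $v$ outright.

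Concretely, fix a vertex $v$ and a specific $\ell$-cycle $\hat C$ through $v$ on some designated $\ell$ vertices, and let
\[
E_1:=\bigset{\hat C\subseteq G_{n,p}}, \qquad E_2:=\bigset{v \text{ has at least } d \text{ neighbours in } [n]\setminus V(\hat C)}.
\]
Since $E_1$ and $E_2$ depend on disjoint edge-sets they are independent, and on $E_1\cap E_2$ we have $X_H\ge\binom{d}{r}\ge(1+\eps)\E X_H$. Hence
\[
\Pr\bigpar{X_H\ge(1+\eps)\E X_H}\ge p^\ell\cdot\Pr\bigpar{\Bin(n-\ell,p)\ge d}.
\]

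The core calculation is the binomial tail. Using $\Pr(\Bin(N,p)\ge d)\ge\binom{N}{d}p^d(1-p)^{N-d}$, Stirling, and the fact that $d\gg np$ throughout the lemma's range (so $d/(enp)=\Theta((np)^{\ell/r})$ and thus $\log(d/(enp))=(\ell/r)\log(np)+O(1)$), one obtains
\[
-\log\Pr(E_2)\le d\log\bigpar{d/(enp)}+O(np)=O\bigpar{\mu_H^{1/r}\log(np)}.
\]
Combining with $-\log\Pr(E_1)=\ell\log(1/p)=O(\log n)$ delivers the desired bound, provided the planted-cycle cost is absorbed into $O(\mu_H^{1/r}\log(np))$ --- i.e.\ provided $\log n=O(\mu_H^{1/r}\log(np))$.

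This absorption is the principal obstacle. It holds automatically whenever $\mu_H^{1/r}\log(np)\gtrsim\log n$, i.e.\ for $np\gtrsim(\log n)^{r/(\ell+r)}$, a regime which in particular covers the range of $p$ needed for the application in \refT{thm:UT:LB} (via a suitable choice of the constant $c_H$). In the remaining very slowly-growing sub-regime $1\ll np\ll(\log n)^{r/(\ell+r)}$ one must argue separately --- for example by exploiting that $\mu_H$ is then at most a small polylog of $n$, so that the bound can be verified directly or by a refined variant of the construction (e.g.\ avoiding the full cycle cost by letting the two cycle-edges at $v$ be supplied by $E_2$ itself). The delicate and conceptually essential step is nevertheless always the binomial tail estimate above, which is precisely the source of the $\log(np)$ improvement over the naive planting bound.
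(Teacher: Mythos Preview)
Your binomial-tail calculation is exactly the right engine, and it matches the paper's. The gap is the cycle cost. By planting a \emph{fixed} cycle $\hat C$ you pay $p^\ell$, i.e.\ an additive $\ell\log(1/p)\asymp\log n$ in the exponent, and you yourself note this is not absorbed by $\mu_H^{1/r}\log(np)$ once $np\ll(\log n)^{r/(\ell+r)}$. Your proposed workarounds do not close this: shifting the two cycle-edges incident to $v$ into $E_2$ still leaves $\ell-2\ge 1$ planted edges at cost $\Theta(\log n)$, and ``verifying the bound directly'' when $\mu_H$ is polylogarithmic is not an argument. Also, your remark that the application to \refT{thm:UT:LB} is safe is incorrect as stated: that theorem asserts~\eqref{eq:thm:UT:LB} for \emph{all} $p$ with $1\ll np\ll(\log n)^{c_H}$, which includes arbitrarily slowly growing~$np$.

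The paper removes the obstacle cleanly with a two-round exposure. Write $G_{n,p}=\cE_1\cup\cE_2$ with edge-probabilities $p_1,p_2\asymp p$. In round one, since $np_1\gg 1=m_{C_\ell}^{-1}$ is above the appearance threshold of $C_\ell$, a copy $G'$ of $C_\ell$ exists in $\cE_1$ with probability $1-o(1)$ --- so the cycle is obtained \emph{for free}, not at cost $p^\ell$. Conditioning on $\cE_1$ (hence on $G'$), in round two one asks that a fixed vertex of $G'$ has exactly $z\asymp\mu_H^{1/r}$ neighbours in $[n]\setminus V(G')$ via $\cE_2$-edges; this is precisely your binomial computation and costs $(np)^{-O(z)}$. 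Multiplying gives~\eqref{eq:lem:CE:fixed} over the full range $1\ll np\ll n^{1/(1+\ell/r)}$. The missing idea, in one line: don't plant the cycle, \emph{find} it.
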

\noindent 
One basic strategy for proving lower bounds is to enforce~$F \subseteq G_{n,p}$ for some graph~$F$ which itself contains at least $(1+\eps)\mu_H$ copies of~$H$. 
For example, $F:= C_\ell^{+z}$ contains $\binom{z}{r} \ge (1+\eps)\mu_H$ copies of $H=C_\ell^{+r}$ for suitable~$z \asymp (\mu_H)^{1/r}$. 
Following~\cite{Vu2001,JOR}, by enforcing~$F$ \emph{on the first~$v_F$ vertices} of $G_{n,p}$ we would obtain 
\[
\Pr(X_H \ge (1+\eps)\mu_H) \ge \Pr(F \subseteq G_{n,p}) \ge p^{e_F} \ge \exp\biggcpar{- O\Bigpar{\mu_H^{1/r}\log(1/p)}} .
\]
Here we shall improve the~$\log(1/p) \asymp \log n$ in the exponent to~$\log (np)$ by enforcing~$F$ \emph{somewhere} in~$G_{n,p}$. 
To this end, much in the spirit of a sequential embedding idea from~\cite{BB81}, 
we will below use a two-round exposure of the edges of~$G_{n,p}$ to first find a `random' copy of~$C_\ell$, which we then extend to a copy of~$F= C_\ell^{+z}$. 
We believe that, for~$r \ge 2$ and~$\eps=\Theta(1)$, the resulting rate of decay~\eqref{eq:lem:CE:fixed} is best possible when~$np \to \infty$ slowly.  
\begin{proof}[Proof of \refL{lem:CE:fixed}]
Set~$p_2 := p/2$, and pick~$p_1 \in [p/2,p]$ such that~$(1 - p_1)(1 - p_2) = 1 - p$. 
We expose the edges in two rounds: for $i \in [2]$ we insert each of the $\binom{n}{2}$ possible edges into~$\cE_i$ independently with probability~$p_i$; 
their union~$\cE_1 \cup \cE_2$ then gives~$\Gnp$. 
To establish the lower bound~\eqref{eq:lem:CE:fixed}, the strategy is to 
(i)~first use the $\cE_1$--edges to find one copy~$G'$ of~$G:=C_\ell$,  
and (ii)~then use the $\cE_2$--edges to extend~$G'$ to at least~$\binom{z}{r} \ge (1+\eps)\mu_H$ copies of~$H$, 
by enforcing that (in~$\cE_2$) one vertex of $G'$ has~$z$ neighbours outside of~$V(G')$, where 
\begin{equation*}
z := \Bigceil{r \bigpar{(1+\eps) \mu_H}^{1/r}} \asymp (np)^{1+\ell/r} = o(n) .
\end{equation*}

Turning to the details, for step~(i) let~$\sX_G$ be the number of copies of~$G=C_{\ell}$ in~$\cE_1$. 
Since~$m_G=1$ and~$p_1 \ge p/2 \gg n^{-1}$, it is well-known (see, e.g.,~\cite[Theorem~3.4]{JLR}) that 
\begin{equation}\label{eq:CE:C}
\Pr(\sX_G \ge 1) = 1-o(1) .
\end{equation}

For step~(ii), we henceforth condition on the edge-set~$\cE_1$, and assume that~$\sX_G \ge 1$; 
we also fix a copy~$G'$ of $C_{\ell}$ in~$\cE_1$, and one vertex~$v \in V(G')$. 
Defining~$Z$ as the number of vertices in $[n] \setminus V(G')$ that are neighbours of~$v$ in~$\cE_2$,  
note that $Z = z$ implies $X_H \ge \binom{Z}{r} \ge (Z/r)^r \ge (1+\eps) \mu_H$. 
Hence 
\begin{equation}\label{eq:CE:prdisjoint}
\Pr(X_H \ge (1+\eps) \mu_H \mid \cE_1) \ge \Pr(Z = z \mid \cE_1) = \binom{n-\ell}{z}(p_2)^z (1-p_2)^{n-\ell-z} \ge \Bigpar{\frac{np}{4z}}^{z} e^{-np} \ge (np)^{-O(z)} . 
\end{equation}
It follows that $\Pr(X_H \ge (1+\eps) \mu_H \mid \sX_G \ge 1) \ge (np)^{-O(z)}$, 
which together with~\eqref{eq:CE:C} implies inequality~\eqref{eq:lem:CE:fixed}. 
\end{proof}
\noindent 
It is easy to check that the exponent of~\eqref{eq:lem:CE:fixed} is of order~$[(1+\eps)\mu_H]^{1/r}\log\bigl[(1+\eps)^{1/\ell}np\bigr]$. 
In \refS{sec:further} we will give a variant of the above argument which not only gives a better dependence on~$\eps$ (when~$\eps \to 0$), but also applies to a significantly larger family of graphs~$H$. 
We are now ready to prove \refT{thm:UT:LB} and \refR{rem:UT:LB}. 
%
%
\begin{proof}[Proof of \refT{thm:UT:LB}] 
Define~$\cH := \set{C_\ell^{+r}: \: \ell \ge 3, r \ge 2}$. 
We henceforth fix~$H=C_\ell^{+r} \in \cH$. 
Since every subgraph of $H$ with fewer than $\ell$ vertices is acyclic, 
for $1 \ll np \ll n^{1/(\ell-1)}$ we have 
\begin{equation}\label{eq:thm:UT:LB:PhiH}
	\Phi_H = \min_{ G \subseteq H : e_G \ge 1 } \mu_G \asymp \min\biggcpar{\min_{2 \le k \le \ell -1} \bigcpar{n^kp^{k-1}}, \: \min_{\ell \le k \le \ell + r}\bigcpar{ n^kp^k}} \asymp (np)^\ell \gg 1.
\end{equation}
Turning to the parameter~$M_H$ defined in~\eqref{eq:conj:UT:MG}, note that~$n^2p^{\Delta_H} \ge n$ for~$p \ge n^{-1/\Delta_H}$.
Since~$\alpha^*_G \le v_G \le \ell + r$ holds by definition (cf.~Footnote~\ref{fn:alpha} on page~\pageref{fn:alpha} or~\cite[Appendix~A]{JOR}), 
using~\eqref{eq:thm:UT:LB:PhiH} it follows that 
\begin{equation}\label{eq:thm:UT:LB:MH}
M_H \log(1/p) 
=\Omega\bigpar{\min\bigcpar{\Phi_H^{1/(\ell+r)}, \: n}} \cdot \log(1/p)  \gg \log n. 
\end{equation}
Using~$\ell \ge 3$ and~$r \ge 2$, it now is routine to check that there is a constant $c_H>0$ such that
	\begin{equation}\label{eq:thm:UT:LB:comp}
\mu_H^{1/r}\log(np) \asymp (np)^{1 + \ell/r}\log(np) \ll \min\bigset{(np)^{\ell}, \: \log n} 
	\end{equation}
for $1 \ll np \ll (\log n)^{c_H}$, 
which in view of~\eqref{eq:thm:UT:LB:PhiH}--\eqref{eq:thm:UT:LB:MH}, \refL{lem:CE:fixed} and~$m_H=1$ implies inequality~\eqref{eq:thm:UT:LB}. 
\end{proof}
\begin{proof}[Proof of \refR{rem:UT:LB}]
Note that the above proof shows~$\mu_H^{1/r}\log(np) \ll \Phi_H$ for~$1\ll np \ll n^{1/(\ell-1)}$, 
so~\refL{lem:CE:fixed} implies~\eqref{eq:rem:UT:LB} for~$n^{-1/m_H} \ll p \ll n^{-1/m_H+1/\ell}$, say (recall that~$m_H=1$ and~$r \ge 2$).  
In the remaining range of~$p$ then~\cite[Remark~8.3]{JOR} already states that inequality~\eqref{eq:rem:UT:LB} holds (even for~$n^{-1/m_H}\log n \ll p \ll 1$). 
\end{proof}


\section{Extensions and generalizations}\label{sec:further}
In this section we generalize the lower bound construction from \refS{sec:example}. 
First, in~\refS{sec:proofsketch} we show that many graphs~$H$ are not only counterexamples to~\refConj{conj:UT}, but also fail to have subgaussian upper tails in some range.  
Next, in~\refS{sec:covering:lowerbound} we state a new lower bound for the upper tail, which complements the two clustered/disjoint mechanism based lower bounds from \refS{sec:discussion} used in \refConj{conj:UT}.   
We believe that our new lower bounds will not only serve as a testbed for future refinements of the upper tail conjecture (cf.~\refS{sec:conclusion}), 
but also stimulate the development of new upper bounds (here the importance of having non-trivial lower bounds was already highlighted by Vu~\cite[Section~4.8]{Vu2002} more than 15~years ago).

To state our results, we now introduce some terminology on the structure of the graph~$H$. 
We say that a subgraph~$G \subseteq H$ is \emph{primal (for $H$)} if~$e_G/v_G = m_H$. 
Clearly all primal subgraphs are induced, and thus we can treat them as a family~$\cL_H$ of subsets of~$V(H)$; see \refCl{cl:thm:CE} below for further properties. 
We say that~$G_2$~\emph{covers} a~primal~$G_1$ if~$G_1 \subsetneq G_2$ and there is no further primal~$F$ with~$G_1 \subsetneq F \subsetneq G_2$. 

\subsection{Further counterexamples and a general lower bound construction}\label{sec:proofsketch} 
The first inequality~\eqref{eq:thm:CE:DK} of the following result generalizes \refT{thm:UT:LB}, 
by showing that many graphs~$H$ violate \refConj{conj:UT}. 
The second inequality~\eqref{eq:thm:CE:SG} conceptually generalizes \refR{rem:UT:LB}, 
by showing that the upper tail of these graphs is also not of a subgaussian type,  
no matter how close~$p=p(n)$ is to the appearance threshold~$n^{-1/m_H}$  
(even if we allow $\eps \to 0$ reasonably slowly; for~$H = C_3^{+3}$ this was already shown in~\cite[Example~6.14]{DLP}). 
The assumption~$\lambda \sigma_H \le t = O(\mu_H)$ below means that we are considering large deviations, i.e., deviations that are of higher order than the standard deviation~$\sigma_H=\sqrt{\Var X_H}$. 
\begin{figure}[t]
\begin{center}
\begin{tikzpicture}[thick,scale=1.3,decoration=brace]
			\pgfdeclarelayer{bg}    
			\pgfsetlayers{bg,main} 
    \tikzstyle{vertex}=[draw,circle,fill,fill opacity=1,minimum size=1pt, inner sep=1pt]
	\foreach \x in {1, 2, ..., 5}
	{
		\draw (108 - 72*\x:1) node[vertex,label={108 - \x*72:$x_\x$}] (n\x) {} -- (36 - 72*\x:1) node () {};
	}
	\foreach \x/\y in {1/3,2/4,3/5,4/1,5/2}
	{
		\draw (n\x) -- (n\y);
	}
	\foreach \x/\y in {0.7/6,0/7,-0.7/8} 
	{
		\draw (n1) -- (2,\x) node[vertex, label={above right:$x_\y$}] (n\y) {} -- (n2);
	}
	\draw (n6) -- (3,0.7) node[vertex, label={above right:$x_9$}] () {} -- (3,-1) node[vertex, label={above right:$x_{10}$}] () {} -- (n3);
	\foreach \a/\b/\y/\gr in {1/-1/-1.5/G, 2/-1/-1.9/K, 3/-1/-2.3/H} 
	{
	\draw [decorate] (\a,\y) to node[auto] {$\gr$} (\b,\y);
	}
\end{tikzpicture}
\vspace{-1.5em}
\end{center}
  \caption{Example of a graph with~$r = 3$, $G=H[\set{x_1, \ldots, x_5}])$, $J_i = H[V(G) \cup \{x_{5+i}\}])$, and~$K=J_1 \cup J_2 \cup J_3$, where~$m(H) = e_G/v_G = e_{J_i}/v_{J_i} = e_K/v_K=2$ and~$e_H/v_H = 19/10 < 2$.
\refT{thm:CE:cor} shows that this graph is another counterexample to the DeMarco--Kahn upper tail conjecture (and also fails to have subgaussian upper tails in some range).}
		\label{fig1}
\end{figure}
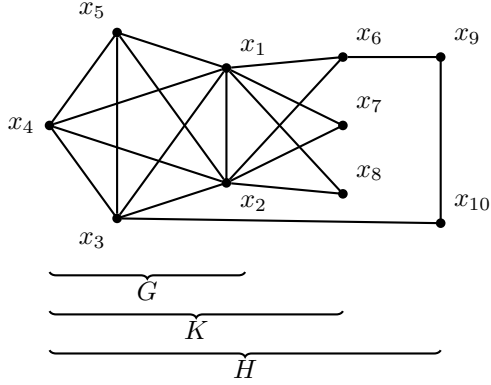
\begin{theorem}\label{thm:CE:cor}
Suppose that there is~$G \in \cL_H$ and distinct~$J_1, \dots, J_r \in \cL_H$ covering~$G$, such that~$K := J_1 \cup \cdots \cup J_r$ satisfies~$v_K/r < \min_{F \in \cL_H} v_F$. 
Then there are constants $c_H,\beta_H>0$ such that the following holds. 
For fixed $\eps>0$ and any~$p=p(n) \in [0,1]$ with~$1 \ll np^{m_H} \ll (\log n)^{c_H}$ we have 
\begin{equation}\label{eq:thm:CE:DK}
-\log \prob{X_H \ge (1+\eps) \E X_H} = o\Bigpar{\min \bigset{ \Phi_H, \: M_H \log(1/p) }} .
\end{equation}
Furthermore, there is~$\lambda = \lambda(n,p,H) \gg 1$ with~$\lambda \sigma_H \ll \mu_H$ such that, whenever~$1 \ll np^{m_H} \ll n^{\beta_H}$ and~$\lambda \sigma_H \le t = O(\mu_H)$ holds, we have 
\begin{equation}\label{eq:thm:CE:SG}
-\log \prob{X_H \ge \E X_H+t} = o\bigpar{t^2/\sigma^2_H} .
\end{equation}
\end{theorem}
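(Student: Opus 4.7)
The plan is to generalize the two-round exposure argument of \refL{lem:CE:fixed}. Partition the edge-set of $\Gnp$ into two independent rounds $\cE_1, \cE_2$ with edge-probabilities $p_1, p_2 \asymp p$ satisfying $(1-p_1)(1-p_2) = 1-p$. Since $G$ is primal we have $m_G = m_H$, so the assumption $np^{m_H} \gg 1$ guarantees via a standard threshold argument that $\cE_1$ contains some copy $\tilde G \cong G$ with probability $1-o(1)$.

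Conditional on a specific $\tilde G$, the construction would plant structure in $\cE_2$ forcing many copies of $K$. For each $i=1,\dots,r$, plant $z := \ceil{C \mu_K^{1/r}}$ disjoint copies of the extension $J_i \setminus G$, attached to $\tilde G$ using fresh vertices outside $V(\tilde G)$ for each planted copy and each type~$i$. Since $K = J_1 \cup \cdots \cup J_r$, any selection of one planted extension of each type together with $\tilde G$ yields a distinct copy of $K$, so for $C = C(H,\eps)>0$ large enough the planted configuration contains at least $z^r \ge 2(1+\eps)\mu_K$ copies of $K$ (where $\mu_K := \E X_K$). When $H=K$ these directly furnish the required copies of $H$; when $H \supsetneq K$, the residual structure $H \setminus K$ has density strictly below $m_H$, so each planted $K$-copy has on average $\Theta(n^{v_H-v_K} p^{e_H-e_K}) \asymp \mu_H/\mu_K$ completions to an $H$-copy in $\Gnp$, and a conditional Chebyshev argument using $\sigma_H^2 \asymp \mu_H^2/\Phi_H$ with $\Phi_H \gg 1$ yields $X_H \ge (1+\eps)\mu_H$ with probability $\ge 1/2$ on the planting event.

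The probability of the planting event itself is estimated by a binomial-type calculation analogous to~\eqref{eq:CE:prdisjoint}: the planted structure involves $z(v_K-v_G)$ new vertices and $z(e_K-e_G) = z m_H(v_K - v_G)$ new edges, using that $G$ and $K$ are both primal of density~$m_H$. Combining and using $\mu_K \asymp (np^{m_H})^{v_K}$, one obtains
\[
\prob{X_H \ge (1+\eps) \E X_H} \ge \exp\Bigpar{-O\bigpar{(np^{m_H})^{v_K/r} \log(np^{m_H})}} .
\]
Setting $\sigma := \min_{F \in \cL_H} v_F$ and using $\mu_F \asymp (np^{m_H})^{v_F}$ for every primal~$F$, one has $\Phi_H \asymp (np^{m_H})^{\sigma}$, so the hypothesis $v_K/r < \sigma$ gives $(np^{m_H})^{v_K/r}\log(np^{m_H}) \ll \Phi_H$ throughout $1 \ll np^{m_H} \ll (\log n)^{c_H}$ for a suitable $c_H > 0$. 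An analogous computation (cf.~\eqref{eq:thm:UT:LB:MH}) shows the new bound also beats $M_H \log(1/p)$, giving~\eqref{eq:thm:CE:DK}. For~\eqref{eq:thm:CE:SG}, I would pick $\lambda = \lambda(n,p,H) \gg 1$ growing slowly and apply the same construction (with a smaller $z$, chosen so that $z^r \asymp t/\mu_{H/K}$) to deviations $t$ with $\lambda \sigma_H \le t = O(\mu_H)$, comparing the resulting bound against $t^2/\sigma_H^2 = \lambda^2$.

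The main obstacle I foresee is the combinatorial bookkeeping in the planting step when $H \supsetneq K$: one must ensure that distinct $r$-tuples of planted extensions give rise to distinct copies of $K$ modulo automorphisms, and that the conditional second moment of the number of $H$-completions through planted $K$-copies is still controlled by the unconditional estimate $\sigma_H^2 \asymp \mu_H^2/\Phi_H$ (the conditional variance may differ, since planted edges bias the counts of subgraphs overlapping the planted set). A secondary subtlety is verifying the quantitative comparisons with $\Phi_H$ and $M_H \log(1/p)$ in the borderline cases where $\sigma$ is attained by a primal subgraph strictly smaller than~$G$, which requires checking that the factor $\log(np^{m_H})$ is absorbed by a small enough choice of~$c_H$.
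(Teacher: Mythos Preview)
Your approach is essentially the paper's: it too plants a copy of $G$ in a first exposure round, then plants $z \asymp (\eps\mu_K)^{1/r}$ vertex-disjoint extensions to each $J_i$ in a second round to manufacture $z^r$ copies of $K$, and finally extends to $H$-copies, obtaining exactly the exponent $(np^{m_H})^{v_K/r}\log(np^{m_H})$ which it then compares to $\Phi_H$ and $M_H\log(1/p)$ via $v_K/r < \min_{F\in\cL_H}v_F$. The paper packages the construction as a separate lemma (\refL{lem:CE}) and resolves your two stated obstacles by using a third exposure round $\cE_3$ together with the split $X_H \ge X'_H + X''_H$ (extensions through planted $K$'s versus fresh copies), handling $X'_H$ by Paley--Zygmund and $X''_H$ by Chebyshev; for~\eqref{eq:thm:CE:SG} it makes the explicit choice $\lambda = (np^{m_H})^c$ with $c \in \bigl(v_0/2 - (rv_0-v_K)/(2r-1),\, v_0/2\bigr)$, which is precisely the range your vague ``$\lambda$ growing slowly'' must target.
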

\noindent 
Before giving the proof, we first use \refT{thm:CE:cor} to argue that counterexamples to \refConj{conj:UT} are abundant, by describing an abstract way of generating them. 
Suppose that we have a balanced graph~$J$ and a primal subgraph~$G$ with the property that~$J$ covers~$G$ (using as illustration \refF{fig1}, consider $G \cong K_5$ and construct~$J$ by connecting two vertices of~$G$ to a common outside neighbour). 
Then we construct~$K$ by `gluing'~$r$ distinct copies $J_1, \ldots, J_r$ of~$J$ in a consistent way\footnote{To make the gluing precise, writing $V(G)=\set{u_1, \ldots ,u_{v_G}}$ and $V(J) \setminus V(G) = \set{w_1, \ldots, w_{v_J-v_G}}$,  
the vertex-set of~$K$ consists of $V(G)$ and $r$ new vertices~$\{w_{j,1}, \dots, w_{j,r}\}$ for each $w_j \in V(J) \setminus V(G)$. 
The edge-set of~$K$ consists of $E(G)$ and~$\bigset{\set{u_i, w_{j,k}}: k \in [r]}$ for every $\set{u_i,w_j} \in E(J) \setminus E(G)$
as well~$\bigset{\set{w_{i,k}, w_{j,k}}: k \in [r]}$ for every~$\set{w_i,w_j} \in E(J) \setminus E(G)$.} 
onto~$G = \bigcap_{i \in [r]} J_r$ (see \refF{fig1} for an example with~$r=3$).
Let~$P$ be a primal of~$J$ with the minimum number of vertices (in \refF{fig1} we have~$P=G$). 
The resulting graph~$K$ is easily seen\footnote{For a formal proof of claims~(i)--(ii) 
note that, for any $Q \subseteq K$ with~$v_Q \ge 1$, using~$m_{J_i}=m_J=m_G = e_G/v_G$ we have 
\[
e_Q = e_{Q \cap G} + \textstyle\sum_{i \in [r]}\bigpar{e_{G \cup (Q \cap J_i)}-e_G} \le m_G \bigsqpar{v_{Q \cap G} + \textstyle\sum_{i \in [r]}\bigpar{v_{G \cup (Q \cap J_i)}-v_G}} = m_J v_Q ,
\] 
which holds with equality for~$Q=K$ and thus establishes~(i). 
For any primal~$Q \subseteq K$ the above inequality must also hold with equality, 
and in view of~$(Q \cap J_i) \cap G = Q \cap G$ it follows that~$e_{Q \cap J_i}=e_{G \cup (Q \cap J_i)}+e_{Q \cap G}-e_G = m_J v_{Q \cap J_i}$ for all~$i \in [r]$. 
Hence any~$Q \cap J_i \neq \emptyset$ (at least one such~subgraph must exist) is a primal of~$J_i \cong J$, so~$v_Q \ge v_{Q \cap J_i} \ge v_P$ establishes~(ii).} 
to (i)~be balanced with density~$m_J$, 
and (ii)~have no primal with fewer vertices than~$P$. If~$v_J - v_G < v_P$ holds, then $v_K/r = v_J - v_G + v_G/r < v_P$ for sufficiently large~$r$, 
in which case \refT{thm:CE:cor} implies that~$H:=K$ is a counterexample to \refConj{conj:UT}
(in~fact, this is true for any graph~$H \supseteq K$ for which~$P$ remains a vertex-minimal primal, as in \refF{fig1}).

We shall prove \refT{thm:CE:cor} as a corollary of the following more 
general result, which qualitatively extends \refL{lem:CE:fixed} to any graph~$H$ that is not strictly balanced 
(and also allows for $\eps \to 0$). 
Here we are again considering large deviations, since by~\eqref{eq:sigmaH} the  
assumption~$\eps^2\Phi_H \gg 1$ is equivalent to~$\eps \E X_H \gg \sqrt{\Var X_H}$. 
\begin{lemma}\label{lem:CE}
For any graph~$H$ with~$e_H \ge 1$ there is a constant $\beta_H>0$ such that the 
following holds for all~$\eps = \eps(n) > 0$ and~$p=p(n) \in [0,1]$ 
with $\eps^2 \Phi_H \gg 1$, $\eps=O(1)$, and~$1 \ll n p^{m_H}  \le n^{\beta_H}$. 
If~$G \in \cL_H$ and distinct~$J_1, \dots, J_r \in \cL_H$ cover~$G$, 
then we have, writing~$K := J_1 \cup \cdots \cup J_r$, 
\begin{equation}\label{eq:thm:CE}
\prob{X_H \ge (1 + \eps)\E X_H} \ge  \exp \biggcpar{ - O\Bigpar{ (\eps\mu_K)^{1/r} \log\bigpar{n p^{m_H}} }},
\end{equation}
where the implicit constant in~\eqref{eq:thm:CE} may depend on~$H$. 
\end{lemma}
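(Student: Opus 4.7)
The plan is to follow the planting strategy of~\refL{lem:CE:fixed} but with a structurally more complex planted graph~$F^*$. Specifically, I take~$F^*$ to be the graph obtained from~$G$ by attaching, for each~$i\in[r]$, exactly $z:=\lceil (C\eps\mu_K)^{1/r}\rceil$ vertex-disjoint copies of the rooted extension~$J_i\setminus G$, with the copies for distinct~$i$ on pairwise disjoint vertex sets outside~$V(G)$; here~$C=C(H)$ is a constant to be chosen large below. By construction $F^*$ contains at least~$z^r \ge C\eps\mu_K$ labelled copies of~$K$ containing the canonical~$G$-subgraph (one for each choice of one extension per type), and~$F^*$ is balanced of density exactly~$m_H$ (using that~$G$ and every~$J_i$ is primal).

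The proof has two main ingredients. First, I establish
\begin{equation*}
\Pr(F^*\subseteq\Gnp)\;\ge\;\exp\bigl(-O(z\log(np^{m_H}))\bigr).
\end{equation*}
The key point is that $F^*$ is to be planted \emph{anywhere} in~$\Gnp$ (not on specific vertices), so $\E[\#F^*]\asymp n^{v_{F^*}}p^{e_{F^*}}/|\mathrm{Aut}(F^*)|$, and using primality of the~$J_i$ (to get $e_{F^*}=e_G+zm_H(v_K-v_G)$), Stirling for $|\mathrm{Aut}(F^*)|\asymp (z!)^r$, and $z\asymp\eps^{1/r}(np^{m_H})^{v_K/r}$, routine calculation simplifies this to $\E[\#F^*]\ge\exp(-O(z\log(np^{m_H})))$. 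Since $F^*$ is balanced of density~$m_H$ and $np^{m_H}\gg 1$, a second-moment estimate (in the spirit of~\cite[Theorem~3.4]{JLR}, adapted to the~$n$-dependent graph $F^*$) then yields $\Pr(\#F^*\ge 1)\gtrsim\E[\#F^*]$.

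Second, conditional on~$F^*\subseteq\Gnp$ I show~$X_H\ge(1+\eps)\mu_H$ with constant probability. Indeed, the planted~$F^*$-copy contains $\ge C\eps\mu_K$ copies of~$K$, each extending in expectation (over the unrevealed edges of~$\Gnp$) to~$\Theta(\mu_H/\mu_K)$ copies of~$H$. Taking~$C$ large therefore makes the conditional expectation of~$X_H$ at least~$(1+2\eps)\mu_H$, and the hypothesis $\eps^2\Phi_H\gg 1$ together with~\eqref{eq:sigmaH} gives $\sigma_H\ll\eps\mu_H$, so Chebyshev yields~$X_H\ge(1+\eps)\mu_H$ with constant conditional probability. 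Multiplying the two ingredients establishes~\eqref{eq:thm:CE}.

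I expect the main obstacle to be the second-moment estimate for $\Pr(\#F^*\ge 1)$: because $v_{F^*}$ grows with~$n$, the standard appearance-threshold machinery does not apply off the shelf, and one must carefully control the pairwise-intersection subgraphs arising between two embeddings of~$F^*$. The covering assumption (no primal strictly between~$G$ and any~$J_i$) should ensure that the relevant intersection subgraphs have density at most~$m_H$ and therefore give admissible contributions to the second moment, while the upper cap $np^{m_H}\le n^{\beta_H}$ keeps $v_{F^*}$ polynomially small so that the variance bound remains manageable. The second ingredient is comparatively routine, relying only on Chebyshev and the standard variance estimate~\eqref{eq:sigmaH}.
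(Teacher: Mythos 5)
Your overall strategy (plant a structure~$F^*$ consisting of~$G$ with~$z$ vertex-disjoint copies of each~$J_i\setminus G$ attached, then extend) is the same as the paper's, and your choice of~$z\asymp(\eps\mu_K)^{1/r}$ matches. The crucial difference is how you estimate the probability of finding this plant. You propose a single second-moment/Paley--Zygmund estimate for $\Pr(\#F^*\ge 1)$; the paper instead builds up~$F^*$ sequentially across independent exposure rounds --- first a copy~$G'$ of~$G$, then, within~$r$ disjoint vertex-sets~$V_1,\dots,V_r$, \emph{exactly}~$z$ extensions of~$G'$ to each~$J_i$ --- and estimates the probability of the ``exactly~$z$'' event via Harris' inequality. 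These are genuinely different computations, and the paper's route is chosen precisely to avoid the issue you flag.

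On that issue, I think your diagnosis of where the difficulty lies is slightly off. You point to the density of intersection subgraphs as the obstruction and suggest the covering hypothesis controls it; but since~$F^*$ is balanced of density~$m_H$, all its subgraphs automatically have density~$\le m_H$, so density is not the bottleneck. The real difficulty in~$\E(\#F^*)^2/(\E\#F^*)^2$ is the combinatorial multiplicity coming from the automorphism group of~$F^*$, which has order of magnitude~$(z!)^r$: when you sum over pairs of $F^*$-copies grouped by their intersection type, both the ``number of subgraphs of~$F^*$ isomorphic to~$F$'' and the ``number of extensions of~$F$ to~$F^*$'' factors carry binomial and factorial quantities in~$z$ and~$k$. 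A back-of-the-envelope check suggests these may cancel to~$\exp(O(z\log\omega))$ overall, which would still suffice for~\eqref{eq:thm:CE}, but it is not a routine off-the-shelf second-moment calculation; and indeed you need the cancellation to within a factor~$\exp(O(z\log(np^{m_H})))$, not the standard~$O(1)$-type Paley--Zygmund bound. The paper's Harris-based argument avoids this altogether and also needs an \emph{initial reduction step} ($\mu_{J_i}/\mu_G\le(\eps\mu_K)^{1/r}$, established by induction on~$r$) to make the Harris bound go through; you should check whether your approach implicitly needs it too.

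There are two further points that need to be made precise. First, conditioning: you write ``conditional on~$F^*\subseteq\Gnp$'', but the event~$\{\#F^*\ge1\}$ does not single out a copy, and the argument about extending ``the planted~$F^*$-copy'' requires a specific one to hang the extensions on. You need to make the two-round exposure explicit (as the paper does, with~$p_1=p_2=\delta p$ and~$p_3=(1-O(\delta))p$), so that conditioning on the first round fixes a copy of~$F^*$ while leaving independent edges for the extension step. Second, the claim that Chebyshev yields ``constant conditional probability'' is not correct as stated: the~$H$-copies that extend the plant all pass through~$G'$, hence are highly correlated, so the conditional variance is not comparable to~$\sigma_H^2$. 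The paper therefore splits~$X_H\ge X'_H+X''_H$ (copies through~$G'$ versus copies disjoint from~$G'$), proves~$\Pr(X'_H\ge2\eps\mu_H)\gg\omega^{-v_K}$ by Paley--Zygmund (not a constant!), proves~$\Pr(X''_H\ge(1-\eps)\mu_H)=1-o(1)$ by Chebyshev, and uses Harris to combine them. The weaker~$\omega^{-v_K}$ bound is still absorbed into~$\exp(-O(z\log\omega))$ since~$z\gg1$, but it is not ``constant'', and the splitting plus Harris is a necessary step, not just a stylistic choice. So the second ingredient is not quite as routine as you claim.
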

\begin{remark}\label{rem:thm:CE}
The proof shows 
that for~$\eps=\Theta(1)$ the condition $\eps^2\Phi_H \gg 1$ is redundant (as in~\refL{lem:CE:fixed},  where~$m_H=1$), 
and that 
$\Phi_H \asymp (np^{m_H})^{\min_{F \in \cL_H}  v_F}$ 
holds for~$1 \ll n p^{m_H}  \le n^{\beta_H}$ (cf.~inequalities~\eqref{eq:CE:density:F}--\eqref{eq:CE:density}). 
\end{remark}
\noindent 
Refining the proof strategy of \refL{lem:CE:fixed}, inspired by~\cite{Vu2001,JOR,JWL,APUT} 
the idea is to first enforce~$y=\Theta(\eps \mu_K)$ copies of~$K$ via some some special~$F \subseteq G_{n,p}$,  
which we again find via two exposure rounds. 
Then we simultaneously (a)~extend these~$y$ copies of~$K$ to~$2\eps\mu_H$ copies of~$H$, 
and (b)~also find additional $(1-\eps) \mu_H$ `random' copies of~$H$. 
The routine proof of the following auxiliary claim is deferred to \refApp{apx:proof}.
\begin{claim}\label{cl:thm:CE} 
The following holds:\vspace{-0.5em}%
\begin{enumerate}
\renewcommand{\labelenumi}{\textup{(\roman{enumi})}}
\renewcommand{\theenumi}{\textup{(\roman{enumi})}}
\itemsep 0.125em \parskip 0em  \partopsep=0pt \parsep 0em 
	\item\label{union} For distinct~$G_1, G_2 \in \cL_H$ we have~$G_1 \cup G_2 \in \cL_H$. 
	\item\label{connected} If~$G \in \cL_H$ and~$J \in \cL_H$ covers~$G$, then the graph~$J \setminus G := J[V(J)\setminus V(G)]$ is connected. 
	\item\label{disjoint} If~$G \in \cL_H$ and distinct~$J_1, \dots, J_r \in \cL_H$ cover~$G$, then the~$J_i \setminus G$ are pairwise vertex-disjoint.\vspace{-0.125em}%
\end{enumerate}%
\end{claim}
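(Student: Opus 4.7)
The three parts all follow from a single submodularity observation about induced subgraph densities in $H$, which I will set up first and then invoke in each part. For induced subgraphs $F_1, F_2 \subseteq H$ with $V(F_1) \cap V(F_2) \ne \emptyset$, one has the identity $v(F_1 \cup F_2) + v(F_1 \cap F_2) = v(F_1) + v(F_2)$ together with the inequality $e(F_1 \cup F_2) + e(F_1 \cap F_2) \ge e(F_1) + e(F_2)$, the latter because any $H$-edge between $V(F_1) \setminus V(F_2)$ and $V(F_2) \setminus V(F_1)$ is counted on the induced-union side but on none of $F_1, F_2, F_1 \cap F_2$. Combined with $e(F) \le m_H v(F)$ for every induced $F$ with at least one vertex, if both $F_1, F_2 \in \cL_H$ then these inequalities must all collapse to equalities, so both $F_1 \cup F_2$ and $F_1 \cap F_2$ are primal.

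For part~\ref{union}, when $V(G_1) \cap V(G_2) \ne \emptyset$ the above observation gives $G_1 \cup G_2 \in \cL_H$ immediately; when $V(G_1) \cap V(G_2) = \emptyset$ the chain $m_H\bigpar{v(G_1) + v(G_2)} = e(G_1) + e(G_2) \le e(G_1 \cup G_2) \le m_H v(G_1 \cup G_2) = m_H\bigpar{v(G_1) + v(G_2)}$ forces equality throughout, so $G_1 \cup G_2$ is again primal (and in fact $H$ has no edge between $V(G_1)$ and $V(G_2)$).

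For part~\ref{connected}, I argue by contradiction: if $J \setminus G$ were disconnected, pick a nontrivial bipartition $V(J) \setminus V(G) = A \sqcup B$ with no $H$-edges between $A$ and $B$, and set $F_A := H[V(G) \cup A]$ and $F_B := H[V(G) \cup B]$. Then $F_A \cap F_B = G$ and $F_A \cup F_B = J$, both primal; moreover the lack of $A$--$B$ edges gives $e(F_A) + e(F_B) = e(G) + e(J) = m_H\bigpar{v(F_A) + v(F_B)}$, which together with $e(F_A) \le m_H v(F_A)$ and $e(F_B) \le m_H v(F_B)$ forces $F_A$ and $F_B$ both to be primal. Then $G \subsetneq F_A \subsetneq J$ contradicts the hypothesis that $J$ covers $G$.

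For part~\ref{disjoint}, suppose $V(J_i) \cap V(J_j) \supsetneq V(G)$ for some $i \ne j$. Applying the opening observation with $F_1 = J_i$ and $F_2 = J_j$ (whose vertex sets share at least $V(G)$) shows that $J_i \cap J_j = H[V(J_i) \cap V(J_j)]$ is primal. If $J_i \cap J_j \subsetneq J_i$, then $G \subsetneq J_i \cap J_j \subsetneq J_i$ contradicts $J_i$ covering $G$; otherwise $V(J_i) \subseteq V(J_j)$, and since $J_i \ne J_j$ we obtain $G \subsetneq J_i \subsetneq J_j$, contradicting $J_j$ covering $G$. The only real obstacle across all three parts is choosing the right induced subgraphs to feed into the submodularity identity; once that is in place each contradiction is essentially a matter of tracking edge and vertex counts.
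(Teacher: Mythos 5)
Your proof is correct and follows essentially the same approach as the paper: both arguments rest on the inclusion--exclusion identity for vertex counts combined with $e_F/v_F \le m_H$ for induced subgraphs, and the resulting collapse to equality. Your unifying ``submodularity'' lemma is a slightly cleaner way to package the computation that the paper redoes (in equivalent form) in each of the three parts; you also handle a couple of edge cases the paper glosses over with ``clearly'' (namely $V(G_1)\cap V(G_2)=\emptyset$ in part~(i), and the possibility $V(J_i)\subseteq V(J_j)$ in part~(iii)), which is a genuine, if minor, improvement in rigor. One small presentational remark: in part~(ii) you invoke the ``opening observation,'' but there you are not starting from primality of $F_A,F_B$ --- rather you use the same identity/inequality, plus primality of $G$ and $J$ and the absence of cross-edges, to \emph{deduce} primality of $F_A,F_B$. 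The calculation you write out is correct; it is just a different instance of the inclusion--exclusion bookkeeping rather than a direct application of the stated lemma.
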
 
\begin{proof}[Proof-Sketch of \refL{lem:CE}]
Deferring the choices of the 
constants $C_H \ge 1 \ge c_H > 0$, let  
\begin{align}
\label{eq:CE:defz}
z &:= \Bigceil{ \bigpar{C_H\eps \mu_K}^{1/r}} ,\\
\label{eq:CE:defdelta}
\delta &:= c_H \min\{\eps,1\} .
\end{align}
Similarly as in the proof of \refL{lem:CE:fixed}, 
we expose the edges of~$\Gnp$ in three rounds: for $i \in [3]$ we insert each of the $\binom{n}{2}$ possible edges into~$\cE_i$ independently with probability~$p_i$, where 
\begin{equation}
\label{eq:CE:defpi}
p_1 := p_2 := \delta p \quad \text{ and } \quad p_3 := 1 - \frac{1-p}{(1 - p_1)(1-p_2)} = \left( 1 - O(\delta) \right) p .
\end{equation}

To establish the lower bound~\eqref{eq:thm:CE}, the strategy is to 
(i)~first use the $\cE_1$--edges to find one copy~$G'$ of~$G$. 
Next, we (ii)~partition the remaining vertex-set $[n] \setminus V(G')$ into $r$ sets~$V_1, \ldots, V_r$ of approximately equal sizes, 
and use the~$\cE_2$--edges to simultaneously extend~$G'$ to~$z$ copies of each~$J_i$ which (a)~embed~$V(J_i\setminus G)$ into~$V_i$, and (b)~are pairwise vertex-disjoint outside of~$V(G')$. 
This clearly enforces~$y:= z^r$ copies of~$K=J_1 \cup \cdots \cup J_r$ extending~$G'$ 
(by \refCl{cl:thm:CE}\ref{disjoint} all subgraphs~$J_i \setminus G$ are pairwise vertex-disjoint).  
Finally, we (iii)~use the $\cE_3$--edges to show that we can simultaneously
(a)~extend~$y=\Theta(C_H\eps \mu_K)$ of the aforementioned special copies of~$K$ via the $\cE_3$--edges 
to at least $2\eps \mu_H$ copies of~$H$, 
and (b)~also find at least $(1-\eps)\mu_H$ additional copies of~$H$ in~$\cE_3$ itself, 
so that we overall obtain~$X_H \ge 2\eps \mu_H + (1-\eps)\mu_H \ge (1+\eps)\mu_H$ copies of~$H$.

While some care is needed, the technical details of the outlined steps are mostly elementary, 
and thus deferred to \refApp{apx:proof}. Here we just mention that, 
analogously to 
\refL{lem:CE:fixed},  the probability of the `disjoint construction' from step~(ii) again gives the main contribution to our lower bound. 
In particular, by a more involved variant of the `enforcing $z$~neighbours' argument from~\eqref{eq:CE:prdisjoint},  
the aforementioned probability of step~(ii) that~$G'$ has~$z$ `non-overlapping extensions' to each~$J_i$
 will turn out to be 
(noting that~$\prod_{i \in [r]} n^{v_{J_i}-v_G} p^{e_{J_i}-e_G} \asymp \prod_{i \in [r]} (\mu_{J_i}/\mu_G) \asymp \mu_K/\mu_G $ by \refCl{cl:thm:CE}\ref{disjoint}, and that~$z^r \asymp \eps \mu_K$) 
roughly of form 
\begin{equation}\label{eq:thm:CE:disjoint1}
\prod_{i \in [r]} \binom{\binom{|V_i|}{v_{J_i}-v_G}}{z} {p_2}^{(e_{J_i}-e_G)z} 
\ge \biggpar{\prod_{i \in [r]}\frac{\Theta\bigpar{n^{v_{J_i}-v_G}(\delta p)^{e_{J_i}-e_G}}}{z}}^{z} 
\ge \biggpar{\frac{\Theta\bigpar{\prod_{i \in [r]}{\delta}^{e_{J_i}-e_G}} }{\eps \mu_G}}^{z}.
\end{equation}
Using~$\delta \asymp \eps$, $\eps^2 \gg 1/\Phi_H \ge 1/\mu_G$ and~$\mu_G \asymp (np^{m_H})^{v_G} \gg 1$ (by primality of~$G$), this in turn is at least 
\begin{equation}\label{eq:thm:CE:disjoint2}
\biggpar{\frac{\Theta\bigpar{\eps^{\sum_i (e_{J_i} - e_G) - 1}}}{\mu_G}}^{z} 
\ge \biggpar{\frac{1}{\mu_G^{\Theta(1)}}}^{z} \ge \bigpar{n p^{m_H}}^{-O(z)},
\end{equation}
making the right-hand side of inequality~\eqref{eq:thm:CE} plausible (see \refApp{apx:proof} for the full details). 
\end{proof}

\begin{proof}[Proof of \refT{thm:CE:cor}] 
Let~$\omega := np^{m_H}$ and $v_0 := \min_{F \in \cL_H} v_F$. 
Note that~$\Phi_H \asymp \omega^{v_0} \gg 1$ by \refR{rem:thm:CE}. 
Since the graph $K = J_1 \cup \dots \cup J_r$ is primal by \refCl{cl:thm:CE}\ref{union}, 
it follows easily that $\mu_K \asymp \omega^{v_K}$ (see, e.g.,~\eqref{eq:CE:density:F}).

We are now ready to prove~\eqref{eq:thm:CE:DK}.  
Since~$\Phi_H = O((\log n)^{v_0c_H})$ holds by assumption, we have~$\Phi_H \ll \log n \ll M_H \log (1/p)$ for~$c_H>0$ small enough. 
Since~$v_K/r < v_0$ holds by assumption, we also have~$\mu_K^{1/r}\log(np^{m_H}) \asymp \omega^{v_K/r}\log \omega \ll \omega^{v_0} \asymp \Phi_H$, 
so that inequality~\eqref{eq:thm:CE:DK} follows from \refL{lem:CE} (as~$\eps^2 \Phi_H \asymp \Phi_H \gg 1$).

We next turn to~\eqref{eq:thm:CE:SG}. 
Pick positive~$c \in \bigl(v_0/2-(rv_0 - v_K)/(2r-1), v_0/2\bigr)$, and define~$\lambda := \omega^c$.  
Using the variance estimate~\eqref{eq:sigmaH} we infer $\lambda \sigma_H/\mu_H \asymp \lambda /\Phi_H^{1/2} \asymp \omega^{c-v_0/2} \ll 1$ and thus~$\lambda \sigma_H \ll \mu_H$. 
Defining~$\eps := t/\mu_H = O(1)$, using~\eqref{eq:sigmaH} we also infer~$\eps^2\Phi_H \asymp t^2/\sigma_H^2 \ge \lambda^2 \gg 1$, so \refL{lem:CE} applies. 
Combining~$t^2/\sigma_H^2 \asymp \eps^2\Phi_H$  and~$\eps \ge \lambda \sigma_H/\mu_H \asymp \omega^{c-v_0/2}$ 
with~$\Phi_H \asymp \omega^{v_0}$ and~$\mu_K \asymp \omega^{v_K}$, it follows by choice of~$c$ that, say, 
\[
\bigpar{t^2/\sigma_H^2}^r \asymp \eps \cdot \eps^{2r-1} \bigpar{\Phi_H}^r \ge \eps \cdot \Omega\Bigpar{\omega^{(c-v_0/2)(2r-1)+r v_0}} \gg \eps \cdot \omega^{v_K} (\log \omega)^r \asymp \eps\mu_K (\log \omega)^r .
\]
This readily implies~$(\eps\mu_K)^{1/r} \log \omega \ll t^2/\sigma_H^2$, 
which in view of~\eqref{eq:thm:CE} establishes inequality~\eqref{eq:thm:CE:SG}. 
\end{proof}

\subsection{Optimizing the lower bound for the upper tail}\label{sec:covering:lowerbound}
In this subsection we optimize the lower bound~\eqref{eq:thm:CE} for the upper tail over all possible choices of~$G$ and~$K = J_1 \cup \dots \cup J_r$, 
restricting to the important case where~$\eps>0$ is fixed (as in \refP{pr:UT}); see \refL{lem:opt} below. 
To state our result, given~$G \in \cL_H$, let $J_1, \dots, J_{s(G)}$ be \emph{all} primals of~$H$ which cover~$G$, ordered by the increasing number of vertices (how the ties are broken is irrelevant for our purposes). 
Then, for graphs~$H$ which are not strictly balanced 
(which implies that there is~$G \in \cL_H$ with~$s(G) \ge 1$), we define  
\begin{equation}\label{eq:zetas}
	\zeta_H(G) := \min_{r \in [s(G)]}\biggset{\frac{v_G + \sum_{i=1}^r (v_{J_i} - v_{G})}{r}}
	\quad \text{ and } \quad 
	\zeta_H := \min_{G \in \cL_H} \bigset{\zeta_H(G) : \: s(G) \ge 1 }.
\end{equation}
\begin{lemma}\label{lem:opt}
For every graph~$H$ that is not strictly balanced 
there is a constant~$\beta_H > 0$ such that the following holds. 
For fixed~$\eps>0$ and any~$p=p(n) \in [0,1]$ with~$1 \ll n p^{m_H}  \le n^{\beta_H}$ we have 
 \begin{equation}\label{eq:lower_general}
\prob{X_H \ge (1+\eps)\E X_H} \ge \exp\biggcpar{- O\Bigpar{(np^{m_H})^{\zeta_H} \log (np^{m_H})}} ,
\end{equation}
where the implicit constant in~\eqref{eq:lower_general} may depend on~$\eps$ and~$H$. 
\end{lemma}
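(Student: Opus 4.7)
The plan is to optimize the lower bound of \refL{lem:CE} over all admissible pairs $(G, (J_1,\dots,J_r))$ and match the resulting exponent against the definition~\eqref{eq:zetas}. Set $\omega := np^{m_H}$. First I would pick $G^\star \in \cL_H$ and $r^\star \in [s(G^\star)]$ attaining the outer and inner minima in the definition of~$\zeta_H$, and take $J_1, \dots, J_{r^\star}$ to be the first $r^\star$ primals covering~$G^\star$ in the vertex-count ordering fixed by~\eqref{eq:zetas}. If no $G \in \cL_H$ has $s(G)\ge 1$, then $\zeta_H = +\infty$ by convention and~\eqref{eq:lower_general} is trivial, so there is nothing further to do in that degenerate case.

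Next I would verify the hypotheses of \refL{lem:CE} in the claimed range. Since $\eps$ is fixed we trivially have $\eps = O(1)$, and \refR{rem:thm:CE} gives $\Phi_H \asymp \omega^{\min_{F \in \cL_H} v_F} \gg 1$, hence $\eps^2\Phi_H \gg 1$; I would take $\beta_H$ to be the constant supplied by \refL{lem:CE}. Applying that lemma to $G^\star$ and $J_1,\dots,J_{r^\star}$ and absorbing the factor $\eps^{1/r^\star}$ into the implicit constant then yields
\[
\prob{X_H \ge (1+\eps)\E X_H} \ge \exp\Bigcpar{-O\bigpar{\mu_K^{1/r^\star}\log \omega}} ,
\]
where $K := J_1 \cup \cdots \cup J_{r^\star}$.

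The final step is to compute $\mu_K^{1/r^\star}$ and identify it with $\omega^{\zeta_H}$. By \refCl{cl:thm:CE}\ref{union} the union $K$ is itself primal for~$H$, so $e_K = m_H v_K$ and $\mu_K \asymp n^{v_K} p^{m_H v_K} = \omega^{v_K}$. By \refCl{cl:thm:CE}\ref{disjoint} the pieces $J_i \setminus G^\star$ are pairwise vertex-disjoint, so
\[
v_K \;=\; v_{G^\star} + \sum_{i=1}^{r^\star}\bigpar{v_{J_i}-v_{G^\star}} \;=\; r^\star \cdot \zeta_H(G^\star) \;=\; r^\star \cdot \zeta_H .
\]
This gives $\mu_K^{1/r^\star} \asymp \omega^{\zeta_H}$ and establishes~\eqref{eq:lower_general}.

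I expect the only (modest) obstacle to be this last identification of $v_K$: \emph{a priori} the covers $J_i$ might overlap outside of $G^\star$, which would make $v_K$ strictly smaller than the numerator of $\zeta_H(G^\star)$ and thereby weaken the bound. It is precisely part~\ref{disjoint} of \refCl{cl:thm:CE} that rules this out, so that the quantity defining $\zeta_H(G^\star)$ coincides with $v_K/r^\star$. Everything else is routine bookkeeping together with the optimal choice of $G^\star$ and $r^\star$.
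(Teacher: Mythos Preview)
Your proposal is correct and follows essentially the same route as the paper: apply \refL{lem:CE} with a well-chosen $G$ and covers $J_1,\dots,J_r$, use \refCl{cl:thm:CE}\ref{union} to get $\mu_K \asymp \omega^{v_K}$, and use \refCl{cl:thm:CE}\ref{disjoint} to compute $v_K = v_G + \sum_i (v_{J_i}-v_G)$. The only cosmetic difference is that the paper fixes an arbitrary $G$ and argues that among all nonempty index sets $S \subseteq [s(G)]$ the ratio $v_{K_S}/|S|$ is minimized by some initial segment $[r]$ (so that the definition of $\zeta_H(G)$ really captures the optimum of \refL{lem:CE} for that $G$), whereas you go straight to the overall minimizer $(G^\star,r^\star)$; for the stated lower bound your shortcut is perfectly sufficient.
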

\begin{proof}
Fix arbitrary~$G \in \cL_H$ with $s(G) \ge 1$. 
Combining \refL{lem:CE} with $\mu_K^{1/r} \asymp (np^{m_H})^{v_K/r} \gg 1$ (cf.~the proof of \refT{thm:CE:cor}), 
it suffices to show that 
the minimum of~$v_{K_S}/|S|$ over all~$S \subseteq [s(G)]$ with~$S \neq \emptyset$ equals~$\zeta_H(G)$, 
where~$K_S := \cup_{i \in S} J_i$. 
By \refCl{cl:thm:CE}\ref{disjoint} the graphs~$J_i$ share no vertices except for those in $V(G)$,~so 
\begin{equation}\label{eq:vK}
v_{K_S} = v_G + \sum_{i \in S} (v_{J_i} - v_G).
\end{equation}
Recalling~$v_{J_1} \le \dots \le v_{J_{s(G)}}$, a moment's thought reveals  
that the minimum is always attained by one of the sets~$S \in \{ [1], [2], \dots, [s(G)]\}$, 
which establishes~$\min_S v_{K_S}/|S| = \zeta_H(G)$ and thus completes the proof. 
\end{proof}
\noindent 
%
%
It seems difficult to give a simple combinatorial description of the~$G \in \cL_H$ which minimize~$\zeta_H(G)$ in~\eqref{eq:zetas}. 
For balanced graphs~$H$ it is natural to first focus on the so-called `grading decomposition' $\{G_0, \dots, G_s\} \subseteq \cL_H$ of Bollob\'as and Wierman~\cite{BW89}, 
which determines the limit distribution of~$X_H$ at the appearance threshold (i.e., when $p \sim c n^{-1/m_H}$ for some~$c \in (0,\infty)$). 
Turning to the inductive definition of their decomposition, let~$G_0$ be the union of minimal primal subgraphs of~$H$. 
Then, given~$G_i \neq H$, let~$G_{i+1}$ be the union of all primal subgraphs covering~$G_i$. 
For balanced graphs~$H$ the resulting grading~$G_0 \subset \cdots \subset G_s$ always terminates with~$G_s=H$ 
(and \refCl{cl:thm:CE}\ref{union} implies~$G_j \in \cL_H$).  
In~\cite{BW89} the distribution of~$X_H$ at the threshold is then determined inductively:  
first counting $G_0$-subgraphs, then $G_1$-subgraphs that contain the~$G_0$ subgraphs, etc, continuing until all~$H$-subgraphs are counted. 
Moving a tiny bit above the appearance threshold (as in \refL{lem:opt}), 
it thus sounds plausible that the exponential decay of $\prob{X_H \ge (1+\eps)\E X_H}$ 
could potentially be determined by one of the `transitions' from~$G_i$ to~$G_{i+1}$, 
which in turn suggests that the minimum in~$\zeta_H$ might perhaps be attained by some~$G_j$. 
The following example shows that this speculation is~false.
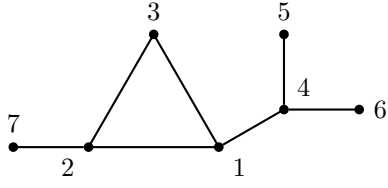
\begin{figure}[t]
\begin{center}
		\begin{tikzpicture}[thick,scale=1,decoration=brace]
    \tikzstyle{vertex}=[draw,circle,fill,fill opacity=1,minimum size=1pt, inner sep=1pt]
    \def\l{3}; 
    \def\r{2};
    \begin{scope}[rotate=-30]
	    \foreach \x in {1, 2, ..., \l}
	    {
		    \pgfmathsetmacro\angle{360/\l*(1 - \x)};
		    \draw (\angle:1) node[vertex,label={\angle -30 :$\x$}] (n\x) {} -- ( - 360/\l*\x:1) node () {};
	    }
	    \draw (n1) -- +(60:1) node[vertex,label={45:$4$}] (n4) {};
	    \draw (n4) -- +(120:1) node[vertex,label={90:$5$}] (n5) {};
	    \draw (n4) -- +(30:1) node[vertex,label={0:$6$}] (n6) {};
	    \draw (n2) -- +(210:1) node[vertex,label={90:$7$}] (n7) {};
    \end{scope}
	  \end{tikzpicture}\vspace{-1.5em}
  \end{center}
  \caption{The snail graph~$H$, which is balanced and satisfies~$m_H=1$. 
	\refE{ex:snail} demonstrates that no graph in the Bollob\'as--Wierman grading decomposition~$H[123] \subset H[12347] \subset H$ minimizes~$\zeta_H(G)$ in~\eqref{eq:zetas}.}
		\label{fig:snail}
\end{figure} 
\begin{example}\label{ex:snail}
Consider the graph $H$ in Figure \ref{fig:snail} with~$v_H=7$. 
Its primals (as vertex sets) are $123$, $1234$, $1237$, $12347$, $12345$, $12346$, $123456$, $123457$, $123467$, and~$1234567$.
Straightforward case checking reveals that~$\zeta_H$ is attained by~$1234$, 
which is covered by the three primals $12345$, $12346$, and~$12347$, 
so that~$\zeta_H(H[1234]) = \min \set{5/1,6/2,7/3}=7/3$. 
However, the Bollob\'as-Wierman grading decomposition is 
$G_0 := H[123] \subset G_1 := H[12347] \subset G_2 := H$, 
and both~$\zeta_H(G_0) = 5/2$ and~$\zeta_H(G_1) = 7/2$ are suboptimal.
\end{example}

\section{Concluding remarks}\label{sec:conclusion}
In this note we showed that the DeMarco--Kahn upper tail conjecture is false.  
Nevertheless we believe that its prediction is true when~$H$ is strictly balanced 
or~$p=p(n)$ is sufficiently above the appearance threshold. 
\begin{conjecture}
\refConj{conj:UT} is true for any strictly balanced graph~$H$. 
Furthermore, for any fixed~$\gamma>0$, \refConj{conj:UT} is true under the additional assumption~$p \ge n^{-1/m_H+\gamma}$. 
\end{conjecture}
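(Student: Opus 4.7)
The plan is to prove the two assertions of the conjecture separately, since they correspond to regimes where rather different tools are effective. For the first assertion, when $H$ is strictly balanced one has $\cL_H = \{H\}$, so $\Phi_H = \mu_H$ and, crucially, the hypothesis $v_K/r < \min_{F \in \cL_H} v_F$ of \refT{thm:CE:cor} is vacuous: there are no primal proper subgraphs to be covered, and hence the locally-disjoint mechanism responsible for our counterexamples simply cannot operate. Since the lower bounds from~\cite{JOR,DKcliques,Sileikis} already match the conjectured form in this case, the task reduces to proving a matching upper bound. I would split by the range of $p$: for $n^{-1/m_H} \ll p \le n^{-1/m_H}(\log n)^{C_H}$ invoke \cite{Vu2000,Sileikis2012,LW16}; for $p \ge n^{-1/m_H + \gamma}$ rely on the second assertion of the conjecture; the serious work is the intermediate range, where I would attempt to remove the stray $\log(1/p)$ factor in~\cite{JOR} by exploiting strict balance in their deletion/switching argument --- specifically, the fact that any $H$-copy using at most one edge outside a planted core $F$ must inherit almost all of its density from $F$, allowing a tight union bound on the number of extensions.

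For the second assertion, the strategy is to extend the nonlinear large-deviation framework of~\cite{CD16,LZ17,BGLZ17,Eldan16} from $p \ge n^{-\delta_H}$ down to any $p \ge n^{-1/m_H + \gamma}$. Concretely, I would (a) apply a container-type decomposition to discretize the edge statistics, (b) solve the associated entropic variational problem, which for $p$ polynomially above the appearance threshold is expected to produce exactly the exponent $\Theta\bigpar{\min\{\Phi_H, M_H \log(1/p)\}}$ (the two summands corresponding to the clustered and disjoint mechanisms of \refS{sec:discussion}), and (c) transfer the variational solution back to the discrete setting via a stability/closeness argument. The matching lower bound is already supplied by the clustered and disjoint mechanisms.

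The main obstacle in the strictly balanced case is clearly the intermediate range: closing the $\log(1/p)$ gap has been open since~\cite{JOR} in 2002, and the present paper shows that the gap is genuinely necessary in general. The hope is that the trivial primal lattice $\cL_H = \{H\}$ forces every witness of $\set{X_H \ge (1+\eps)\mu_H}$ to look locally like a planted core plus independent random extensions, eliminating the slack through sharper entropy accounting. For the second assertion the main obstacle is more technical: current nonlinear large-deviation techniques rely on a form of graphon regularity that degrades near the appearance threshold (in the sense that the effective replica-symmetric window shrinks), so one would need new spread/container estimates that remain quantitatively effective at $p = n^{-1/m_H + \gamma}$ for arbitrarily small fixed $\gamma > 0$.
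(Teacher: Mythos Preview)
The statement you are attempting to prove is a \emph{conjecture}, not a theorem: the paper explicitly presents it in \refS{sec:conclusion} as an open problem that the authors believe to be true, and offers no proof. So there is no ``paper's own proof'' to compare against.

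Your proposal is not a proof either, and you seem to be aware of this. You correctly identify that for strictly balanced~$H$ the locally-disjoint mechanism cannot operate (since~$\cL_H=\{H\}$), and that the lower bounds are already known. But the upper bound in the intermediate range $n^{-1/m_H}(\log n)^{C_H} \ll p \ll n^{-1/m_H+\gamma}$ requires closing the~$\log(1/p)$ gap from~\cite{JOR}, which has been open for over fifteen years; you acknowledge this yourself. Your suggested fix --- exploiting strict balance in a deletion/switching argument to control extensions from a planted core --- is a reasonable heuristic but is not a proof, and there is no indication that it actually works. Similarly, for the second assertion you propose to push the nonlinear large-deviation machinery of~\cite{CD16,Eldan16,BGLZ17} from~$p \ge n^{-\delta_H}$ down to~$p \ge n^{-1/m_H+\gamma}$, but the current technology does not reach this range (as you again acknowledge), and ``new spread/container estimates'' is a wish, not an argument.

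In short: you have written a plausible research outline for attacking an open conjecture, correctly identifying the two main obstacles, but nothing here constitutes a proof. The paper does not claim to prove this statement, and neither should you.
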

\noindent
We leave it as an intriguing open problem to formulate an upper tail conjecture for graphs 
which are not strictly balanced (this would already be interesting for balanced graphs). 
Combining the new `locally-disjoint mechanism' based lower bound~\eqref{eq:lower_general} from \refL{lem:opt} 
with the previously known clustered/disjoint mechanism based lower bounds~\eqref{eq:lower:clustered}--\eqref{eq:lower:disjoint} from \refS{sec:discussion}, 
it is tempting to speculate that we might perhaps have 
\begin{equation}\label{eq:newconjecture}
	- \log \prob{X_H \ge (1 + \eps)\mu_H} = \Theta\Bigpar{\min\bigset{\Phi_H, \: M_H \log(1/p), \:  (np^{m_H})^{\zeta_H} \log(np^{m_H})}},
\end{equation}
which we believe to be correct for many graphs 
(e.g, for the graphs~$C_\ell^{+r}$ from \refS{sec:example}). 
However, the following result shows that the natural guess~\eqref{eq:newconjecture} is false 
for the balanced graphs~$H_r$ illustrated in \refF{fig:clusterexample}, 
indicating that for subgraph counts 
a general upper tail conjecture is most likely quite complicated. 
\begin{figure}[t]
\begin{center}
\begin{tikzpicture}[thick,scale=1.2,decoration=brace]
			\pgfdeclarelayer{bg}    
			\pgfsetlayers{bg,main} 
    \tikzstyle{vertex}=[draw,circle,fill,fill opacity=1,minimum size=1pt, inner sep=1pt]
	\foreach \x in {1, 2, ..., 6}
	{
		\draw (210 - 60*\x:1) node[vertex,label={210 - \x*60:$\x$}] (n\x) {} -- (150 - 60*\x:1) node () {};
	}
	\draw (n1) -- (n4);
	\draw (n2) -- (n5);
	\foreach \y in {7, 9, 10}
	{
	  \draw (-\y + 5, 0) node [vertex] (n\y) {} -- (-\y + 5 - 0.3, -0.5) node [vertex] () {} -- (-\y + 5 + 0.3, -0.5) node[vertex] () {} -- (n\y) -- (n1);
	}
	\draw node at (-3, -0.5) {$\dots$};
	\draw [decorate] (-1.5,-0.7) to node[auto] {$r - 1$ times} (-5.5,-0.7);
	\draw (n3) -- ++(1,0) node[vertex] (n10) {} 
	           -- ++(0.5,-0.5) node[vertex] (n11) {} 
	           -- ++(-0.5,-0.5) node[vertex] (n12) {} 
		   -- (n4);
	
\end{tikzpicture}
\vspace{-1.5em}
\end{center}
  \caption{The graph~$H_r$, which is balanced and satisfies $m_{H_r}=4/3$. 
	\refT{thm:badnews} illustrates that the upper tail behaviour of~$H_r$ is extremely complicated for~$r \ge 7$  (see also \refApp{apx:badnews}).}
		\label{fig:clusterexample}
\end{figure}
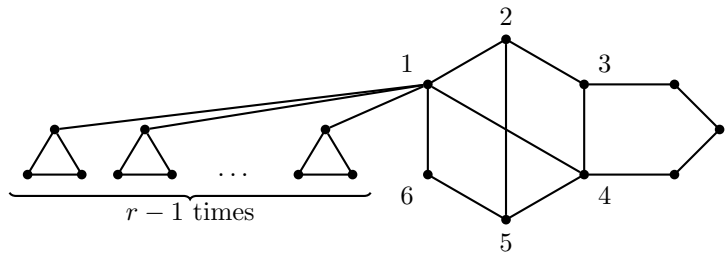
\begin{theorem}\label{thm:badnews} 
Let $\cH := \set{H_r: \: r \ge 7}$. 
For any~$H \in \cH$ there are constants~$1 > d_H > c_H > 0$ such that the following holds.  
For fixed~$\eps>0$ and any~$p=p(n) \in [0,1]$ with~$(\log n)^{c_H} \ll n p^{m_H}  \ll (\log n)^{d_H}$ we have 
\begin{equation}\label{eq:thm:badnews:LB}
-\log \prob{X_H \ge (1+\eps) \E X_H} = o\Bigpar{\min\bigset{\Phi_H, \: M_H \log(1/p), \:  (np^{m_H})^{\zeta_H} \log(np^{m_H})}} .
\end{equation}
\end{theorem}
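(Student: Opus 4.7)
\emph{Proof proposal.} The plan is to exhibit, for each $r \ge 7$, a construction (most likely a planted subgraph combined with a multi-round exposure argument) yielding a lower bound on $\prob{X_{H_r} \ge (1+\eps) \E X_{H_r}}$ whose log-probability is $o\bigpar{\min\bigset{\Phi_{H_r}, \: M_{H_r}\log(1/p), \: (np^{m_{H_r}})^{\zeta_{H_r}}\log(np^{m_{H_r}})}}$ in some non-empty polylogarithmic sub-window $(\log n)^{c_H} \ll \omega \ll (\log n)^{d_H}$ of $\omega := np^{m_{H_r}}$. The structural features of $H_r$ to be exploited are that its unique smallest primal is the hexagonal core $G$ (six vertices, eight edges, density~$4/3$), that $r-1$ of the covering primals of $G$ are triangle-pendants sharing a common vertex of $G$ (call it vertex~$1$), and that the remaining covering primal is the $3$-$4$-path extension.

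Inspired by the three-round exposure in the proof of \refL{lem:CE}, I would first use the $\cE_1$-edges to locate a copy of $G$ in $G_{n,p}$ at essentially no edge-cost. Then, in subsequent rounds, I would plant a combination of (a)~a clustered ``hub'' at vertex~$1$ of the found $G$, exploiting the $(r-1)$-fold symmetry among the pendant covering primals to generate many vertex-disjoint triangle-pendant configurations simultaneously at sub-linear per-pendant edge-cost, (b)~possibly an analogous clustered structure near the pair of vertices $3,4$ of $G$ to generate many $3$-$4$-path extensions, and (c)~random completion in the final round (using that the expected number of $3$-$4$-path completions of a given $G$ is $\asymp \omega^3 \gg 1$ in the considered regime). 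A conditional second-moment argument would then yield $X_{H_r} \ge (1+\eps)\E X_{H_r}$ with conditional probability $1 - o(1)$ given the planted structure, as in the proof of \refL{lem:CE}.

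The main obstacle will be the delicate four-way comparison needed to ensure that, for some choice of cluster parameters and some non-empty sub-window of $\omega$, the resulting exponent is strictly smaller than \emph{each} of the three benchmarks simultaneously. A single clustered hub at vertex~$1$ of the found $G$ turns out to match but not beat $M_{H_r}\log(1/p) \asymp \omega^2 \log n$ in most sub-windows (since a single clique $K_{\Theta(\omega)}$ already realises essentially the same global savings), so it appears necessary to genuinely combine clustering at vertex~$1$ with additional clustering or locally-disjoint extensions elsewhere in $G$. The threshold $r \ge 7$ is expected to be the smallest value of $r$ for which such a combined strategy admits a non-empty window $(c_H, d_H) \subset (0,1)$; determining the exact values of $c_H$ and $d_H$ via a four-way optimisation over the cluster parameters, and verifying the non-emptiness of the window, is the crux of the argument.
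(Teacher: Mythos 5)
Your high-level plan is essentially the same as the paper's: find a copy of the hexagonal core $G=H[123456]$ cheaply, then combine a locally-disjoint construction at vertex~$1$ (vertex-disjoint triangle pendants) with a clustered plant near vertices~$3,4$ (a complete bipartite graph generating many $5$-vertex paths), choosing a window of $\omega := np^{m_H}$ where the combination wins. Two remarks on where your proposal departs from the paper, and one genuine gap.

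First, a departure in strategy: you propose a final round of ``random completion'' followed by a conditional second-moment argument in the style of \refL{lem:CE}. The paper's proof of \refT{thm:badnews} does not do this; it simply enforces $\binom{z}{r-1}\cdot z^* > (1+\eps)\mu_H$ copies of $H_r$ outright from the planted structure, using only a two-round exposure. Your extra round is unnecessary here and would complicate the bookkeeping (you would need to verify that the conditional expectation of the completions is still $\Theta(\mu_H)$ after removing the $O(\delta p)$ probability mass from the earlier rounds, exactly the step handled by~\eqref{eq:CE:prH2} in \refL{lem:CE}). Also, calling the triangle-pendant construction a ``clustered hub'' is a misnomer; it is the locally-disjoint mechanism, and the path construction is the clustered one -- you have the roles of the two mechanisms somewhat reversed in the exposition, though the content is right.

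The genuine gap is the quantitative balancing that makes the four-way comparison close, which you flag as ``the crux'' but do not supply. The decisive idea is to \emph{shift} a small polynomial factor of $\omega$ between the two counts: take $z \asymp \omega^{v_H/r - \gamma}$ (slightly \emph{fewer} disjoint triangles than the naive $\omega^{v_H/r}$) and $z^* \asymp \omega^{v_H/r + (r-1)\gamma}$ (correspondingly \emph{more} clustered paths), with $\gamma := 1/r^3$, so that $z^{r-1} z^* \asymp \omega^{v_H} \asymp \mu_H$ still suffices. The disjoint cost is then $O(z\log\omega) = O(\omega^{v_H/r-\gamma}\log\omega)$, which is automatically $o(\omega^{v_H/r}\log\omega)$. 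The clustered plant requires only $O(\sqrt{z^*})$ vertices in the bipartite hub, hence $O(\sqrt{z^*})$ edges, giving cost $O(\sqrt{z^*}\,\log n) = O(\omega^{(v_H/r+(r-1)\gamma)/2}\log n)$; the square-root saving is what lets the inflated $z^*$ remain affordable. Requiring this to be $o(\omega^{v_H/r}\log\omega)$ pins down the lower bound $\omega \gg (\log n)^{c_H}$ with $c_H = 2/(v_H/r-(r-1)\gamma)$, while the comparison $\omega^{\zeta_H}\log\omega \ll M_H\log(1/p) \asymp \omega^2 \log n$ (using $\zeta_H = v_H/r > 2$) pins down the upper bound $\omega \ll (\log n)^{d_H}$ with $d_H = 1/(v_H/r - 2 + \gamma/2)$. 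Non-emptiness $c_H < d_H$ reduces to $\gamma < (1-6/r)/r$, which with $\gamma = 1/r^3$ holds precisely for $r \ge 7$. Without this explicit shift of $\gamma$, the ``combined strategy'' claim does not actually yield a bound strictly below all three benchmarks simultaneously, so the proposal as written does not close the argument.
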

\noindent
The proof of inequality~\eqref{eq:thm:badnews:LB} is based on the 
observation that different kinds of extensions (for~$H_r$ from \refF{fig:clusterexample} the dangling triangle and the rooted path) 
can have different ranges of~$p=p(n)$ where the disjoint mechanism beats the clustered one, 
which means that in some transitional range of~$p=p(n)$ a mixture of both mechanisms can potentially give better bounds (which turns out to be the case for~$H_r$). 
More precisely, adapting the framework of \refL{lem:CE} for~$H=H_r$ with~$G:=H[123456]$ and~$K :=H$, 
after planting one copy of~$G$ here the idea is to (a)~enforce~$z$ vertex-disjoint triangles which are each connected to vertex~$1$ of~$G$, 
and (b)~enforce at least~$z^*$ clustered copies of $5$-vertex paths with endvertices~$3,4$ of~$G$ 
(by planting a complete bipartite graph which connects a fixed vertex-set~$U$ of size~$2\sqrt{z^*}$ 
with the vertex-set~$\set{w,3,4}$, where the extra vertex~$w \not\in V(G) \cup U$ is also fixed). 
Analyzing these two mechanisms, it turns out that this way we obtain at least~$\binom{z}{r-1} \cdot z^*$ copies of~$H_r$ with probability at least~$(n p^{m_H})^{-O(z)} \cdot p^{\Theta(\sqrt{z^*})}$, 
which for suitable~$z \ll \mu_{H}^{1/r} \ll z^*$ and~$p=p(n)$ eventually gives inequality~\eqref{eq:thm:badnews:LB}; see \refApp{apx:badnews} for the details. 

Of course, one could augment~\eqref{eq:newconjecture} by the above-discussed new mix of the disjoint/clustered mechanisms 
(by adapting Lemmas~\ref{lem:CE} and~\ref{lem:opt}), 
but we are not sure if the resulting bound would be optimal (in general).

Finally, it would also be interesting to explore if Stein's method, 
large deviation theory (possibly after altering the variational problem from~\cite{CV11,CD16,Eldan16}), 
or some other probabilistic approach could yield an educated guess 
for the solution to the upper tail problem (\refP{pr:UT}) 
close to the appearance threshold~$n^{-1/m_H}$.

\bigskip{\noindent\bf Acknowledgements.} 
We thank the referees for helpful suggestions.

\small

\normalsize

\begin{appendix}

\section{Appendix: Proof of \refT{thm:badnews}}\label{apx:badnews}
\begin{proof}[Proof of \refT{thm:badnews}] 
Fix~$H=H_r \in \cH$, with~$v_H = 3r+6$. Let~$\omega := n p^{m_H}$ and~$\gamma := 1/r^3$. 
Define 
\[
c_H := \frac{2}{v_H/r-(r-1)\gamma} 
\qquad \text{ and } \qquad 
d_H := \frac{1}{v_H/r-2+\gamma/2} , 
\]
noting that~$c_H < d_H$ as~$\gamma < (4-v_H/r)/r = (1-6/r)/r$. 
Since~$G$ has the smallest number of vertices among primals, we obtain~$\Phi_H \asymp \omega^6$ by \refR{rem:thm:CE}. 
Using~$1 \ll \omega \le n^{o(1)}$ and~$m_H \le \Delta_H/2$, it is not difficult to verify that~$M_H = \min_{G \subseteq H: e_G \ge 1} \mu_G^{v_G/\alpha_G^*}$ 
and thus~$M_H \asymp \omega^{\min_{F \in \cL_H} v_F/\al{F}}$~holds 
(e.g., by combining~\eqref{eq:CE:density:F}--\eqref{eq:CE:density:F:nonprimal} with~$1/\al{F} \in [1/v_F,1]$). 
Since every~$F \in \cL_{H}$ is a union of~$G$ and some (possibly empty) subset of the~$J_i$, using~\cite[Proposition A.4]{JOR} it turns out that~$\al{F} = v_F/2$, 
so~$M_H \asymp \omega^2$. 
It is routine to check that~$\zeta_H = v_H/r = 3+6/r$. 
It follows that~$\Phi_H \gg \omega^{\zeta_H} \log \omega$ and~$M_H \log(1/p)/\omega^{\zeta_H} \asymp (\log n)/\omega^{v_H/r-2} \gg \log \omega$, 
so the minimum in~\eqref{eq:thm:badnews:LB} satisfies 
\begin{equation}\label{eq:badnewsexponent}
\min\bigset{\Phi_H, \: M_H \log(1/p), \:  (np^{m_H})^{\zeta_H} \log(np^{m_H})} = \omega^{v_H/r}\log \omega .
\end{equation}

We are now ready to establish~\eqref{eq:thm:badnews:LB} by adapting the proof of \refL{lem:CE:fixed}, 
exposing the edges of~$G_{n,p}$ via~$\cE_1 \cup \cE_2$ in two independent rounds with edge-probabilities~$p_2 := p/2$ and $p_1 \in [p/2,p]$. 
For the desired lower bound, the strategy is to (i)~first use the $\cE_1$--edges to find one copy $G'$ of $G:=H[123456]$, where the vertices~$v_j$ of~$G'$ correspond to vertices~$j$ of $G$ (see  \refF{fig:clusterexample}).  
Next we (ii)~partition the vertex-set~$[n]\setminus V(G') = V_1 \cup V_2$ into two sets with~$|V_i| \approx n/2$, 
and then use the $\cE_2$--edges to simultaneously (a)~create~$z$ vertex-disjoint triangles in~$V_1$, which are each connected to vertex~$v_1$ of~$G'$, 
and (b)~create~$z^*$ `clustered' copies of a~$5$-vertex-path whose internal vertices are in~$V_2$ and whose endpoints are~$v_3,v_4$ of~$G'$.   
This together enforces at least~$\binom{z}{r-1} \cdot z^* > (1+\eps)\mu_H$ copies of~$H=H_r$ extending~$G'$ (see \refF{fig:clusterexample}), where 
\begin{align*}
z := \Bigceil{r((1+\eps) \mu_H)^{1/r} \omega^{- \gamma}} \asymp \omega^{v_H/r-\gamma} 
\quad \text{ and } \quad 
z^* := \Bigceil{((1+\eps)\mu_H)^{1/r} \omega^{(r-1)\gamma}} \asymp \omega^{v_H/r+(r-1)\gamma} . 
\end{align*}
Turning to the details, in step~(i) we find with probability~$1-o(1)$ at least one copy of~$G:=H[123456]$ in~$\cE_1$, 
since~$m_G=4/3=m_H$ and~$p_1 \ge p/2 \gg n^{-1/m_H}$ is above the appearance threshold. 
For step~(ii), we henceforth condition on the edge-set~$\cE_1$ and fix one copy~$G'$ of~$G$ in~$\cE_1$. 
Mimicking the calculations leading to~\eqref{eq:CE:prJ}--\eqref{eq:CE:prJ:harris} in \refApp{apx:proof}, 
it turns out that the probability of step~(ii).(a) is at least
\begin{equation}\label{eq:badnews:step2a}
\frac{1}{z!}
\prod_{0 \le j < z} \biggsqpar{\binom{|V_1| - 3j}{3} p_2^{4}} \cdot \omega^{-o(z)} 
\ge \biggpar{\frac{\Theta(n^{3}p^4 \omega^{-o(1)})}{z}}^{z} 
\ge \biggpar{\frac{\Theta(\omega^{3-o(1)})}{\omega^{v_H/r-\gamma}}}^z
\ge \omega^{-o(\omega^{v_H/r})}, 
\end{equation}
where we used~$v_H/r=3+6/r$. 
Turning to step~(ii).(b), after fixing a vertex-set~$U \subseteq V_2$ of size $|U|=\ceil{2 \sqrt{z^*}}$ and a vertex~$w \in V_2 \setminus U$,   
we define~$F$ as the complete bipartite graph between~$U$ and~$\set{v_3,v_4,w}$. 
Note that the union of~$G'$ and~$F$ contains at least~$\binom{|U|}{2} \ge z^*$ different $5$-vertex-paths with endpoints~$v_3,v_4$ and internal vertices from~$V_2$. 
Recalling~$p_2 = n^{-1/m_H + o(1)}$, the probability of step~(ii).(b) is thus at least
\begin{equation}\label{eq:badnews:step2b}
\prob{F \subseteq \cE_2} = 
p_2^{3|U|} 
\ge n^{-\Theta(\sqrt{z^*})} \ge \omega^{-o(\omega^{v_H/r})}, 
\end{equation}
where we used~$\omega^{v_{H}/r}/\sqrt{z^*}\asymp \omega^{(v_H/r-(r-1)\gamma)/2} \ge \log n$. 
Noting that the step~(ii) events lower bounded by~\eqref{eq:badnews:step2a}--\eqref{eq:badnews:step2b} 
are independent (as they depend on disjoint edge-sets), 
it follows that $\prob{X_H \ge (1+\eps) \E X_H} \ge \omega^{-o(\omega^{v_H/r})}$, 
which together with~\eqref{eq:badnewsexponent} implies inequality~\eqref{eq:thm:badnews:LB}.  
\end{proof}

\section{Appendix: Proof of \refL{lem:CE} and \refCl{cl:thm:CE}}\label{apx:proof}
\begin{proof}[Proof of \refCl{cl:thm:CE}]
For property~\ref{union}, 
using~$e_{G_i}/v_{G_i} = m_H \ge e_{G_1 \cap G_2}/v_{G_1 \cap G_2}$ 
it routinely follows that
\begin{equation}\label{eq:G1G2union}
\frac{e_{G_1 \cup G_2}}{v_{G_1 \cup G_2}} = \frac{e_{G_1} + e_{G_2} - e_{G_1 \cap G_2}}{v_{G_1} + v_{G_2} - v_{G_1 \cap G_2}} \ge m_H ,
\end{equation}
which implies that~$G_1 \cup G_2 \subseteq H$ is primal.

For property~\ref{disjoint}, suppose that~$J_i\setminus G$ and $J_j \setminus G$ with~$i \neq j$ are not vertex-disjoint. 
Clearly~$G \subsetneq J_i \cap J_j \subsetneq J_i$. Since~$J_k$ covers~$G$, this implies~$e_{J_i \cap J_j}/v_{J_i \cap J_j} < m_H$. 
Since~$e_{J_k}/v_{J_k}=m_H$, analogously to~\eqref{eq:G1G2union} we infer~$e_{J_i \cup J_j}/v_{J_i \cup J_j} > m_H$, 
reaching the desired contradiction (since~$J_i \cup J_j \subseteq H$).

For property~\ref{connected}, suppose that~$J \setminus G$ is not connected. 
Then we can partition~$V(J \setminus G)=V(J) \setminus V(G)$ into two non-empty vertex-sets~$V_j$  
such that there are no edges between~$V_1$ and~$V_2$ in~$J$. 
Since the graphs~$F_j := J[V(G) \cup V_j]$ are not primal (as~$G \subsetneq F_j \subsetneq J$), we have $e_{F_j}/v_{F_j} < m_H = e_{G}/v_{G}$. 
It follows that
\[
\frac{e_{J}}{v_{J}} = \frac{e_{F_1} + e_{F_2} - e_G}{v_{F_1} + v_{F_2} - v_G}  < m_H ,
\]
reaching the desired contradiction (since~$J \subseteq H$ is primal).  
\end{proof}

\begin{proof}[Proof of \refL{lem:CE}]
We keep the setup from the sketch in \refS{sec:proofsketch}: in particular, 
we shall expose the edges of~$\Gnp$ via~$\cE_1 \cup \cE_2 \cup \cE_3$ in three independent rounds 
with edge-probabilities~$p_1=p_2=\delta p$ and~$p_3 = (1-O(\delta))p$, 
where~$\delta = c_H \min\{\eps,1\}$ and~$c_H \le 1$. 
Adding an extra initial reduction step, we claim that  
it suffices to prove \refL{lem:CE} 
for graphs~$K=J_1 \cup \cdots \cup J_r$ which satisfy, for all $i \in [r]$, 
\begin{equation}\label{eq:reduction}
\mu_{J_i}/\mu_G \le (\eps\mu_K)^{1/r} . 
\end{equation}
To see that this implies \refL{lem:CE} for arbitrary~$K=J_1 \cup \cdots \cup J_r$, 
we use induction on the number of~$J_1, \ldots, J_r$ 
(formally allowing the implicit constant in inequality~\eqref{eq:thm:CE} to depend on~$1 \le r \le v_H$). 
The base case~$r=1$ is immediate, since~\eqref{eq:reduction} always holds 
due to~$(\eps \mu_K)^{1/r} = \eps \mu_G \cdot (\mu_{J_1}/\mu_G)$ and~$\eps \mu_G \ge \eps \Phi_H \gg \eps^{-1} = \Omega(1)$. 
For~$r \ge 2$ it suffices to consider the case where~\eqref{eq:reduction} fails for some~$i \in [r]$. 
Set~$K':= \bigcup_{j \neq i} J_j$. 
Applying induction (with~$K$ replaced by~$K'$, and thus~$r$ replaced by~$r-1$), 
the lower bound~\eqref{eq:thm:CE} holds with~$(\eps\mu_{K'})^{1/(r-1)}\log(np^{m_H})$ in the exponent. 
It thus remains to check that 
\begin{equation}\label{eq:betterfewer}
	(\eps \mu_{K'})^{1/(r-1)} = O\bigpar{(\eps\mu_{K})^{1/r}}.
\end{equation}
Using \refCl{cl:thm:CE}\ref{disjoint} we obtain $\mu_{K} \asymp \mu_{K'} \cdot \mu_{J_i}/\mu_G$. 
Since we assumed that~\eqref{eq:reduction} fails (i.e., that $\mu_{J_i}/\mu_G > (\eps\mu_K)^{1/r}$ holds) 
we infer~$\eps\mu_{K'} \asymp \eps\mu_{K} \cdot \mu_G / \mu_{J_i}=O((\eps \mu_K)^{1-1/r})$ 
and thus establish \eqref{eq:betterfewer}, completing the proof of the claimed reduction.

To facilitate our three-step proof strategy, we henceforth assume that~\eqref{eq:reduction} holds for all~$i \in [r]$.
Furthermore, we fix an ordering~$u_{1}, \ldots, u_{v_H}$ of the vertices of~$H$ 
such that the first~$v_G$ vertices are vertices of~$G$, 
the following~$v_{J_1} - v_G$ vertices are vertices of~$J_1 \setminus G$, followed by the vertices of~$J_2 \setminus G$, and so on up to $J_r \setminus G$ 
(this is possible since the subgraphs~$J_i \setminus G$ are pairwise vertex-disjoint, see \refCl{cl:thm:CE}\ref{disjoint}), 
while the final~$v_H - v_K$ vertices are the remaining vertices of~$H \setminus K$. 
We also introduce the shorthand notation
\begin{equation}\label{eq:CE:omega}
\omega := n p^{m_H} \quad \text{ with } \quad 1 \ll \omega \le n^{\beta_H} .
\end{equation}
We assume~$\beta_H < 1/v_H$, so that every primal subgraph~$F \subseteq H$ satisfies 
\begin{equation}\label{eq:CE:density:F}
\mu_F \asymp \bigpar{n p^{e_F/v_F}}^{v_F} = \bigpar{np^{m_H}}^{v_F} = \omega^{v_F} \le n^{v_F\beta_H} \ll n .  
\end{equation}
Furthermore, for any non-primal subgraph~$F \subseteq H$ we have~$B_{F,H} := m_H - e_F/v_F>0$, 
so that, say, 
\begin{equation}\label{eq:CE:density:F:nonprimal}
\mu_F \asymp \bigpar{np^{m_H} \cdot p^{- (m_H-e_F/v_F)}}^{v_F} \ge \bigpar{\omega \cdot n^{B_{F,H}(1-\beta_H)/m_H}}^{v_F} \gg n^{2v_H^2\beta_H } \ge \omega^{2v_H^2}
\end{equation}
for $\beta_H > 0$ small enough (the ad hoc~$2v_H^2$-term is convenient later on). 
From \eqref{eq:CE:omega}--\eqref{eq:CE:density:F:nonprimal} we easily deduce
\begin{equation}\label{eq:CE:density}
\Phi_G \ge \Phi_H \gg 1.
\end{equation}
%
Using $\eps^2\Phi_H \gg 1$ and \eqref{eq:CE:density:F} we obtain
\begin{equation}\label{eq:CE:prG:eps}
\delta \asymp\min\{\eps,1\} \gg (\Phi_H)^{-1/2} \ge (\mu_G)^{-1/2}= \Omega(\omega^{-v_G/2}) .
\end{equation}
Finally, recalling the definition~\eqref{eq:CE:defz} of~$z$, note that $\eps^2\Phi_H \gg 1$ and $\eps=O(1)$ imply $z^r \asymp \eps \mu_K \ge \eps \Phi_H \gg \eps^{-1} = \Omega(1)$ and~$z^r = O(\mu_K)$. 
Since~$K \subseteq H$ is primal (by \refCl{cl:thm:CE}\ref{union}), using~\eqref{eq:CE:density:F} it follows that 
\begin{equation}\label{eq:CE:z}
1 \ll z = O(\omega^{v_K/r}) \ll n^{1/r}.
\end{equation}

Turning to the technical details of step~(i), let~$\sX_G$ be the number of copies of~$G$ in~$\cE_1$. 
We claim that 
\begin{equation}\label{eq:CE:prG}
\Pr(\sX_G \ge 1) \gg \omega^{-v_G e_G} .
\end{equation}
For the proof we use a version of the Paley--Zygmund inequality (see, e.g.,~\cite[(3.3)--(3.4)]{JLR}) 
and the standard estimate $\Var \sX_G/(\E \sX_G)^2 \asymp 1/\Phi_G(n,p_1)$ (see, e.g.,~\cite[Lemma~3.5]{JLR}), 
so that~$p_1=\delta p$ and~$\delta \le 1$ imply  
\begin{equation*}
\Pr(\sX_G \ge 1) \ge \frac{(\E \sX_G)^2}{(\E \sX_G)^2 + \Var \sX_G} \asymp \min\bigcpar{1, \: \Phi_G(n,p_1)} \ge \min\bigcpar{1, \: \delta^{e_G}\Phi_G}.
\end{equation*}
Now inequality~\eqref{eq:CE:prG} follows, since~$\delta \gg \omega^{-v_G/2}$ by~\eqref{eq:CE:prG:eps} and~$\Phi_G \gg 1$ by~\eqref{eq:CE:density}.

For step~(ii), we henceforth condition on the edge-set~$\cE_1$, and assume that~$\sX_G \ge 1$. 
We also fix an \emph{ordered} copy~$G'$ of~$G$ in~$\cE_1$, i.e., a copy of~$G$ with~$E(G') \subseteq \cE_1$ and an ordering $u'_{1}, \ldots, u'_{v_G}$ of $V(G')$ that is consistent with the above-fixed ordering $u_{1}, \ldots, u_{v_G}$ of~$G$ (i.e., the injection~$u_j \mapsto u'_j$ maps edges of~$E(G)$ into edges of~$\cE_1$). 
We partition~$[n] \setminus V(G')$ into~$r$ vertex-sets~$V_1, \dots, V_r$ of approximately equal sizes~$n_i := |V_i| \approx n/r$. 
We say that an $(e_{J_i} - e_G)$-element edge-set~$\cS \subseteq \binom{V_i \cup V(G')}{2} \setminus \binom{V(G')}{2}$ is an \emph{$(G',J_i)$-edge-extension} if there is an injection from~$V(J_i)$ to 
$W(\cS):=V(G') \cup \bigcup_{f \in \cS} f$ 
 with~$u_j \mapsto u'_j$ for~$j \in [v_G]$ that maps every edge~$E(J_i) \setminus E(G)$ to an edge in~$\cS$ (this definition makes sense since~$J_i \setminus G = J_i[V(J_i) \setminus V(G)]$ contains no isolated vertices, see \refCl{cl:thm:CE}\ref{connected}). 
Note that~$|W(\cS) \setminus V(G')| = v_{J_i}-v_G$, and that $\cS \cup E(G')$ corresponds to (the edge-set of) a copy of~$J_i$ which contains~$G'$. 
Let~$Z_{G',J_i}$ be the number of  $(G',{J_i})$-edge-extensions~$\cS \subseteq \cE_2$. 
Noting that the random variables~$Z_{G',J_i}, i \in [r]$ depend on disjoint sets of independent $\cE_2$-edges, we infer
\begin{equation}\label{eq:CE:prZ}
\Pr(Z_{G',J_i} = z \text{ for all $i \in [r]$} \mid \cE_1 )  = \prod_{i \in [r]} \Pr(Z_{G',J_i} = z \mid \cE_1 ). 
\end{equation}
Fix~$i \in [r]$. We claim that 
\begin{equation}\label{eq:CE:prJ}
\Pr(Z_{G',J_i} = z \mid \cE_1 ) \ge \omega^{- O(z)}. 
\end{equation}
The following proof of~\eqref{eq:CE:prJ} is fairly standard (similar to, e.g.,~\cite[Proposition~9.1]{DKcliques}, \cite[Theorem~1]{Sileikis2012} or~\cite[Lemma~23]{APUT}), 
and we shall omit the conditioning on~$\cE_1$ from our notation to avoid clutter. 
Let~$\fS_i$ denote the set of all~$(G',J_i)$-edge-extensions~$\cS$.  
Since~$\cS \subseteq \binom{V_i \cup V(G')}{2} \setminus \binom{V(G')}{2}$ and~$z \ll n$ by~\eqref{eq:CE:z}, 
the number of $z$-element collections~$\cC \subseteq \fS_i$ of edge-extensions with pairwise disjoint vertex-sets~$W(\cS) \setminus V(G')$ is 
at least
\begin{equation}\label{eq:CE:prJ:bound}
\frac{1}{z!}
\prod_{0 \le j < z} \binom{n_i - j (v_{J_i}-v_G)}{v_{J_i}-v_G}
\ge \frac{1}{z!} \biggsqpar{\Bigpar{\frac{n_i - z (v_{J_i}-v_G)}{v_{J_i}-v_G}}^{v_{J_i}-v_G}}^z
\ge \biggpar{\frac{\Theta(n^{v_{J_i}-v_G})}{z}}^{z} .
\end{equation}
For any such collection~$\cC$, for brevity we introduce the events
\begin{equation*}
\cI_\cC := \set{\text{$\cE_2$ contains all $\cS \in \cC$}} \quad \text{ and } \quad 
\cD_\cC := \set{\text{$\cE_2$ contains no $\cS \in \fS_i \setminus \cC$}}. 
\end{equation*}
We trivially have $\Pr(\cI_\cC) \ge p_2^{(e_{J_i}-e_G)z}$ (in fact, this holds with equality), 
and defer the proof of 
\begin{equation}\label{eq:CE:prJ:harris}
\Pr(\cD_\cC \mid \cI_{\cC}) \ge \omega^{-o(z)}.
\end{equation}
Since there are at least~\eqref{eq:CE:prJ:bound} many such collections~$\cC$, 
using disjointness of the events~$\cI_\cC \cap \cD_\cC$ we obtain 
\begin{equation*}
\Pr(Z_{G',{J_i}} = z) \ge \sum_{\cC} \Pr(\cI_\cC) \Pr(\cD_\cC \mid \cI_{\cC}) \ge \biggpar{\frac{n^{v_{J_i}-v_G}p_2^{e_{J_i}-e_G} \omega^{-o(1)}}{z}}^z .
\end{equation*}
Note that~\eqref{eq:CE:density:F} gives~$\mu_{J_i}/\mu_G \asymp \omega^{v_{J_i}-v_G}$. 
Since~$\delta \gg \omega^{-v_G/2}$ by~\eqref{eq:CE:prG:eps} and~$z = O(\omega^{v_K/r})$ by~\eqref{eq:CE:z}, we infer 
\begin{equation*}
\frac{n^{v_{J_i}-v_G}p_2^{e_{J_i}-e_G}}{z} \asymp \frac{\mu_{J_i}}{\mu_G} \cdot \frac{\delta^{e_{J_i}-e_G}}{z}  \ge \omega^{-\Theta(1)} ,
\end{equation*}
and (recalling that we omitted the conditioning on~$\cE_1$ from our notation) inequality~\eqref{eq:CE:prJ} follows. 
It remains to give the deferred proof of estimate~\eqref{eq:CE:prJ:harris}. 
To this end observe that 
\[
\cD_{\cC}= \bigcap_{\cS \in \fS_i \setminus \cC}\set{\cS \not\subseteq\cE_2} \quad \text{ and } \quad \cI_{\cC}= \set{E_\cC \subseteq \cE_2} \qquad \text{ with } \qquad E_\cC := \bigcup_{\cS \in \cC} \cS. 
\]
Noting that the $\set{\cS \setminus E_{\cC} \not\subseteq \cE_2}$ are all decreasing events with respect to the independent~$\cE_2$-edge indicators, 
using Harris' inequality~\cite{Harris1960} (a special case of the FKG-inequality) it follows that 
\begin{equation}\label{eq:CE:prJ:harris1}
\Pr(\cD_\cC \mid \cI_{\cC}) = \Pr\Bigpar{\bigcap_{\cS \in \fS_i \setminus \cC}\set{\cS \setminus E_{\cC} \not\subseteq\cE_2}} \ge 
\prod_{\cS \in \fS_i \setminus \cC}\Pr(\cS \setminus E_{\cC} \not\subseteq\cE_2) 
= \prod_{\cS \in \fS_i \setminus \cC}\Bigpar{1-p_2^{|\cS \setminus E_{\cC}|}} . 
\end{equation}
Recall that each edge-extension~$\cS \in \fS_i$ is isomorphic to~$E({J_i}) \setminus E(G)$. 
Combining that~${J_i} \setminus G = {J_i}[V({J_i}) \setminus V(G)]$ is connected (see \refCl{cl:thm:CE}\ref{connected}) 
with the fact that all vertex-sets~$W(\cS) \setminus V(G')$ with~$\cS \in \cC$ are pairwise disjoint, 
it follows that~$E_\cC$ contains no further edge-extension~$\cS \in \fS_i \setminus \cC$. 
Therefore in every factor in~\eqref{eq:CE:prJ:harris1} we have~$|\cS \setminus E_{\cC}| \ge 1$ 
and thus~$\cS \setminus E_{\cC}$ is isomorphic to $E({J_i}) \setminus E(F)$ for some~$G \subseteq F \subsetneq {J_i}$. 
As~$p_2 \le p \ll 1$, $n_i \le n$ and~$|\cC|=z$, it follows that 
\begin{equation*}
-\log \Pr(\cD_\cC \mid \cI_{\cC}) 
\le 2 \sum_{\cS \in \fS_i \setminus \cC}p_2^{|\cS \setminus E_{\cC}|} 
\le 2 \sum_{G \subseteq F \subsetneq {J_i}} (v_{J_i}|\cC|)^{v_F-v_G} n^{v_{J_i}-v_F}p^{e_{J_i}-e_F}
= 
O\biggpar{\sum_{G \subseteq F \subsetneq {J_i}} z^{v_F-v_G}\frac{\mu_{J_i}}{\mu_F}}  .
\end{equation*}
Our initial reduction step ensures~$\mu_{J_i}/\mu_G \ll z \log \omega$, see~\eqref{eq:reduction} and~\eqref{eq:CE:defz}.  
Furthermore, \eqref{eq:CE:z} gives~$z = O(\omega^{v_K/r})$ and~\eqref{eq:CE:density:F} gives~$\mu_G \asymp \omega^{v_G}$. 
As no $G \subsetneq F \subsetneq {J_i}$ is primal (since~$J_i$ covers~$G$), using~\eqref{eq:CE:density:F:nonprimal} it follows that
\begin{equation*}
-\log \Pr(\cD_\cC \mid \cI_{\cC}) 
= O\biggpar{\frac{\mu_{J_i}}{\mu_G} \biggsqpar{ 1 + \sum_{G \subsetneq F \subsetneq {J_i}} \omega^{v_Fv_K/r} \frac{\omega^{v_G}}{\omega^{2v_H^2}}}}
\ll 
z \log \omega ,  
\end{equation*}
which completes the proof of~\eqref{eq:CE:prJ:harris} and thus inequality~\eqref{eq:CE:prJ}.

For the final step~(iii), we further (in addition to the conditioning on~$\cE_1$ from step~(ii) above) condition 
on the edge-set~$\cE_2$, assuming that~$Z_{G',J_i} = z$ for all~$i \in [r]$. Recalling that the subgraphs~$J_i \setminus G$ are vertex-disjoint (see \refCl{cl:thm:CE}\ref{disjoint}), note that if we pick any~$r$ copies of~$J_1, \ldots, J_r$ counted by~$Z_{G',J_1}, \ldots, Z_{G',J_r}$ (which are all vertex-disjoint outside of $G'$), 
then their union gives a copy of~$K = J_1 \cup \cdots \cup J_r$ (here it matters that the shared copy $G'$ is ordered). 
For each such copy of~$K$ we henceforth fix \emph{one} ordered copy~$K'$ with vertex-ordering $u'_{1}, \ldots, u'_{v_G}, u'_{v_{G}+1}, \ldots, u'_{v_K}$, say.
Let~$\cK$ denote the collection of all such ordered~$K'$ (each of which satisfies $E(K') \subseteq \cE_1 \cup \cE_2$), and define~$V(\cK)$ as the union of all their vertex-sets. 
Note that
\begin{equation}\label{eq:CE:cC}
|\cK| = z^r \asymp C_H \eps \mu_K \asymp C_H \eps n^{v_K}p^{e_K}.
\end{equation}
Given $K'\in \cK$, we say that a copy~$H'$ of~$H$ in $\cE_1 \cup \cE_2 \cup \cE_3$ is an \emph{$(K',H)$-extension} if~$H'$ 
contains the ordered copy~$K'$ with~$V(K')=\{u'_1, \ldots, u'_{v_K}\}$, 
satisfies $V(H') \setminus V(K') \subseteq [n] \setminus V(\cK)$, 
and there is an injection from~$V(H)$ to~$V(H')$ with~$u_j \mapsto u'_j$ for~$j \in [v_K]$ that maps every edge~$E(H) \setminus E(K)$ to an edge in~$\cE_3$. 
Let~$X'_H$ denote the number of copies of~$H$ which are $(K',H)$-extensions for some~$K' \in \cK$. 
Let~$X''_H$ denote the number of copies of~$H$ with vertices in~$[n] \setminus V(G')$ and all edges in~$\cE_3$. 
As the sets of $H$-copies counted by~$X'_H$ and~$X''_H$ are disjoint (the former contain~$G'$, and the latter share no vertices with~$G'$), 
we have~$X_H \ge X'_H + X''_H$. 
Noting that~$X'_H$ and~$X''_H$ are both increasing functions of the independent $\cE_3$-edge indicators,  
using Harris' inequality it follows that 
\begin{equation}\label{eq:CE:prH}
\begin{split}
\Pr(X_H \ge (1+\eps)\mu_H \mid \cE_1, \cE_2) 
& \ge \Pr(X'_H \ge 2\eps \mu_H\mid \cE_1, \cE_2) \cdot \Pr(X''_H \ge (1-\eps) \mu_H  \mid \cE_1, \cE_2) .
\end{split}
\end{equation}
To establish inequality~\eqref{eq:thm:CE} it thus suffices to prove 
\begin{align}
\label{eq:CE:prH1}
\Pr(X'_H \ge 2\eps \mu_H \mid \cE_1, \cE_2) & \gg \omega^{-v_K},\\
\label{eq:CE:prH2}
\Pr(X''_H \ge (1-\eps) \mu_H \mid \cE_1, \cE_2) & = 1-o(1) . 
\end{align}
Indeed, since we conditioned on $\cE_1$ satisfying~$\sX_G \ge 1$ and $\cE_2$ satisfying~$Z_{G',J_i} = z$ for all~$i \in [r]$, 
by combining~\eqref{eq:CE:prH}--\eqref{eq:CE:prH2} 
with estimates~\eqref{eq:CE:prG} and~\eqref{eq:CE:prZ}--\eqref{eq:CE:prJ}, 
then inequality~\eqref{eq:thm:CE} follows readily. 

In the remaining 
proofs of~\eqref{eq:CE:prH1}--\eqref{eq:CE:prH2} we shall again omit the conditioning (on $\cE_1,\cE_2$) from our notation. 
Turning to the crude estimate~\eqref{eq:CE:prH1}, we define~$Y_{K',H}$ as the number of $(K',H)$-extensions, so that 
\[
X'_H = \sum_{K' \in \cK}Y_{K',H} .
\]
Note that~\eqref{eq:CE:cC} and~\eqref{eq:CE:z} imply the rough bound~$|V(\cK)| \le v_K |\cK| \asymp z^r \ll n$, so that $|[n]\setminus V(\cK)| \asymp n$, say.  
Combining~\eqref{eq:CE:cC} with~$p_3 =(1-O(\delta))p \asymp p$ (which due to $\delta = c_H\min\set{\eps,1}$ holds for~$c_H>0$ sufficiently small)  and~$\mu_H = \Theta(n^{v_H}p^{e_H})$, it follows for $C_H >0$ sufficiently large that 
\[
\E X'_H =  \sum_{K' \in \cK} \E Y_{K',H} = |\cK| \cdot \Theta(n^{v_H-v_K}p_3^{e_H-e_K}) = C_H  \cdot \Theta(\eps\mu_H) \ge 4 \eps \mu_H .
\]
Similarly, for all $K'_1, K'_2 \in \cK$ we also have the routine upper bound 
\[
\E(Y_{K'_1,H} Y_{K'_2,H}) \le n^{v_H-v_K} p_3^{e_H-e_K} \sum_{K \subseteq F \subseteq H}  n^{v_H-v_F}p_3^{e_H-e_F} = \prod_{i \in [2]}\E Y_{K'_i,H}  \cdot O\biggpar{\sum_{K \subseteq F \subseteq H} \frac{\mu_K}{\mu_F}} .
\]
Since~$K$ is primal (see \refCl{cl:thm:CE}\ref{union}), by combining~$\mu_F \ge \Phi_H$ with estimates~\eqref{eq:CE:density:F} and~\eqref{eq:CE:density} it follows that 
\[
\E(X'_H)^2 = \sum_{K'_1, K'_2 \in \cK} \E(Y_{K'_1,H} Y_{K'_2,H}) \le (\E X'_H)^2 \cdot O(\mu_K/\Phi_H) \ll (\E X'_H)^2 \cdot \omega^{v_K} .
\]
Using a version of the Paley--Zygmund inequality (see, e.g.,~\cite[Lemma~3.2]{JOR}) we infer 
\[
\Pr(X'_H \ge 2\eps \mu_H) \ge \Pr(X'_H \ge \tfrac{1}{2}\E X'_H) \ge 
\frac{1}{4} \cdot \frac{(\E X'_H)^2}{\E(X'_H)^2}
\gg \omega^{-v_K},
\]
which (recalling that we omitted the conditioning on~$\cE_1,\cE_2$ from our notation) implies inequality~\eqref{eq:CE:prH1}.

Turning to the final estimate~\eqref{eq:CE:prH2}, 
for any $F \subseteq H$ with~$e_F \ge 1$ we define~$Y_F$ as the number of copies of~$F$ with vertex-set in~$[n]\setminus V(G')$ and edge-set in~$\cE_3$, 
so that~$X''_H=Y_H$. 
Note that~$Y_F$ has the same distribution as the number of copies of~$F$ in the (unconditional) binomial random graph $G_{n-v(G),p_3}$. 
Furthermore, $\delta \gg n^{-1}$ follows from~\eqref{eq:CE:prG:eps} and~\eqref{eq:CE:density:F}, with room to spare (since~$G$ is primal).  
Recalling the definitions of~$p_3= (1-O(\delta))p$ and~$\delta = c_H\min\set{\eps,1}$,  
for $c_H>0$ sufficiently small it thus is routine to see that 
\[
\frac{\E Y_F}{\mu_F} = \frac{\binom{n-v(G)}{v_F}}{\binom{n}{v_F}} \biggpar{\frac{p_3}{p}}^{e_F} 
= \bigpar{1-O(n^{-1})} \cdot \bigpar{1 - O(\delta)} 
\ge 1 -\eps/2 .
\]
Since also $\E Y_F \asymp \mu_F$, standard variance estimates for random graphs (see, e.g.,~\eqref{eq:sigmaH} or~\cite[Lemma~3.5]{JLR}) imply  
\[
\Var Y_H \asymp \frac{(\E Y_H)^2}{\min_{F \subseteq H : e_F \ge 1 } \E Y_F} \asymp \frac{(\mu_H)^2}{\Phi_H} .
\]
Using~$X''_H=Y_H$, Chebychev's inequality, and the assumption~$\eps^2\Phi_H \gg 1$ it follows that 
\[
\Pr(X''_H \le (1-\eps)\mu_H) \le \Pr(Y_H \le \E Y_H - \tfrac{1}{2}\eps\mu_H) \le \frac{\Var Y_H}{(\tfrac{1}{2}\eps\mu_H)^2} \asymp \frac{1}{\eps^2 \Phi_H} = o(1) ,
\]
which (as we omitted the conditioning on~$\cE_1,\cE_2$) completes the proof of~\eqref{eq:CE:prH1}--\eqref{eq:CE:prH2} and thus \refL{lem:CE}. 
\end{proof}


\end{appendix}

\end{document}